\documentclass[12pt]{article}
\usepackage{amsfonts,amssymb,amsmath,amsthm}
\date{}

\theoremstyle{plain}
\newtheorem{theorem}{Theorem}[section]
\newtheorem{proposition}{Proposition}[section]
\newtheorem{lemma}{Lemma}[section]
\newtheorem{corollary}{Corollary}[section]
\theoremstyle{definition}
\newtheorem{definition}{Definition}[section]

\newtheorem{example}{Example}[section]
\numberwithin{equation}{section}
\numberwithin{theorem}{section}
\numberwithin{proposition}{section}
\numberwithin{lemma}{section}
\numberwithin{corollary}{section}
\numberwithin{definition}{section}

\newcommand{\R}{\mathbb{R}}
\newcommand{\N}{\mathbb{N}}
\newcommand{\Z}{\mathbb{Z}}
\newcommand{\Q}{\mathbb{Q}}
\newcommand{\D}{\mathcal{D}}
\newcommand{\T}{\mathbb{T}}
\newcommand{\esslim}{\operatornamewithlimits{ess\,lim}}

\newcommand{\essinf}{\operatornamewithlimits{ess\,inf}}
\newcommand{\esssup}{\operatornamewithlimits{ess\,sup}}
\newcommand{\sgn}{\operatorname{sign}}

\newcommand{\const}{\mathrm{const}}
\renewcommand{\div}{\operatorname{div}}
\newcommand{\supp}{\operatorname{supp}}
\newcommand{\meas}{\operatorname{meas}}
\newcommand{\Tr}{\operatorname{Tr}}
\newcommand{\Cl}{\operatorname{Cl}}
\newcommand{\Int}{\operatorname{Int}}
\title{On some properties of entropy solutions of degenerate non-linear anisotropic parabolic equations}
\author{Evgeny Yu. Panov\footnote{Novgorod State University, 41, B.St-Petersburgskaia str., 173003 Veliky Novgorod, Russian Federation and Peoples’ Friendship University of Russia (RUDN University),
6 Miklukho-Maklaya St, Moscow, 117198, Russian Federation}}
 \voffset -27mm
 \hoffset -20mm
 \textwidth 170mm
 \textheight 250mm
\sloppy
\begin{document}
\maketitle
\begin{abstract}
We prove existence of the largest and the smallest entropy solutions to the Cauchy problem for a nonlinear degenerate anisotropic parabolic equation. Applying this result, we establish the comparison principle in the case when at least one of the initial functions is periodic. In the case when initial function vanishes at infinity (in the sense of strong average) we prove the long time decay of an entropy solution under exact nonlinearity-diffusivity condition.

\medskip
Keywords: nonlinear parabolic equations, conservation laws, Cauchy problem, entropy solutions, comparison principle,
nonlinearity-diffusivity condition, decay property
\end{abstract}

\maketitle

\section{Introduction}\label{Intro}
In the half-space $\Pi=\R_+\times\R^n$, $\R_+=(0,+\infty)$, we consider the nonlinear parabolic equation
\begin{equation}\label{1}
u_t+\div_x(\varphi(u)-a(u)\nabla_x u)=0,
\end{equation}
where the flux vector $\varphi(u)=(\varphi_1(u),\ldots,\varphi_n(u))$ is merely continuous: $\varphi_i(u)\in C(\R)$, $i=1,\ldots,n$, and the diffusion matrix $a(u)=(a_{ij}(u))_{i,j=1}^n$ is Lebesgue measurable and bounded:
$a_{ij}(u)\in L^\infty(\R)$, $i,j=1,\ldots,n$. We also assume that the matrix ${a(u)\ge 0}$
(nonnegative definite). This matrix may have nontrivial kernel. Hence (\ref{1}) is a degenerate
(hyperbolic-parabolic) equation. In particular case $a\equiv 0$ it reduces to a first order conservation law
 \begin{equation}\label{con}
u_t+\div_x \varphi(u)=0.
\end{equation}
Equation (\ref{1}) can be written (at least formally) in the conservative form
\begin{equation}\label{1c}
u_t+\div_x\varphi(u)-D^2_x\cdot A(u)=0,
\end{equation}
where the matrix $A(u)$ is a primitive of the matrix $a(u)$, $A'(u)=a(u)$, and the operator $D^2_x$ is the second order  ``divergence''
$$
D^2_x\cdot A(u)\doteq\sum_{i,j=1}^n \frac{\partial^2}{\partial x_i\partial x_j}A_{ij}(u), \quad u=u(t,x).
$$
Equation (\ref{1}) is endowed with the initial condition
\begin{equation}\label{2}
u(0,x)=u_0(x).
\end{equation}

Let $g(u)\in BV_{loc}(\R)$ be a function of bounded variation on any segment in $\R$.
We will need the bounded linear operator $T_g:C(\R)/C\to C(\R)/C$, where $C$ is the space of constants. This operator is defined up to an additive constant by the relation
\begin{equation}\label{efl}
T_g(f)(u)=g(u-)f(u)-\int_0^u f(s)dg(s),
\end{equation}
where
$\displaystyle g(u-)=\lim_{v\to u-} g(v)$ is the left limit of $g$ at the point $u$, and the integral in (\ref{efl}) is understood in accordance with the formula
$$
\int_0^u f(s)dg(s)=\sgn u\int_{J(u)} f(s)dg(s),
$$
where $\sgn u=1$,
$J(u)$ is the interval $[0,u)$ if $u>0$, and $\sgn u=-1$, $J(u)=[u,0)$ if $u\le 0$.
Observe that $T_g(f)(u)$ is continuous even in the case of discontinuous $g(u)$. For instance, if $g(u)=\sgn(u-k)$ then
$T_g(f)(u)=\sgn(u-k)(f(u)-f(k))$. Notice also that for $f\in C^1(\R)$ the operator $T_g$ is uniquely determined by the identity $T_g(f)'(u)=g(u)f'(u)$ (in $\D'(\R)$).

We fix some representation of the diffusion matrix $a(u)$ in the form $a(u)=b^\top(u)b(u)$, where $b(u)=(b_{ij}(u))_{i,j=1}^n$ is a matrix-valued function with measurable and bounded entries,  $b_{ij}(u)\in L^\infty(\R)$.
We recall the notion of entropy solution of the Cauchy problem (\ref{1}), (\ref{2}) introduced in \cite{ChPer1}.

\begin{definition}\label{def1}
A function $u=u(t,x)\in L^\infty(\Pi)$ is called an entropy solution (e.s. for short) of (\ref{1}), (\ref{2}) if
the following conditions hold:

(i) for each $j=1,\ldots,n$ the distributions
\begin{equation}\label{pr}
\div_x B_j(u(t,x))\in L^2_{loc}(\Pi),
\end{equation}
where vectors $B_j(u)=(B_{j1}(u),\ldots,B_{jn}(u))\in C(\R,\R^n)$, and $B_{ji}'(u)=b_{ji}(u)$, $j,i=1,\ldots,n$;

(ii) for every $g(u)\in C^1(\R)$, $j=1,\ldots,n$
\begin{equation}\label{cr}
\div_x T_g(B_j)(u(t,x))=g(u(t,x))\div_x B_j(u(t,x)) \ \mbox{ in } \D'(\Pi);
\end{equation}

(iii) for any convex function $\eta(u)\in C^2(\R)$
\begin{equation}\label{entr}
\eta(u)_t+\div_x T_{\eta'}(\varphi)(u)-D^2_x\cdot T_{\eta'}(A)(u)+\eta''(u)\sum_{j=1}^n (\div_x B_j(u))^2\le 0 \ \mbox{ in } \D'(\Pi);
\end{equation}

(iv) $\displaystyle\esslim_{t\to 0} u(t,\cdot)=u_0$ in $L^1_{loc}(\R^n)$.
\end{definition}

Relation (\ref{entr}) means that for any non-negative test function $f=f(t,x)\in C_0^\infty(\Pi)$
\begin{equation}\label{entrI}
\int_\Pi \bigl[\eta(u)f_t+T_{\eta'}(\varphi)(u)\cdot\nabla_x f+T_{\eta'}(A)(u)\cdot D^2_x f-f\eta''(u)\sum_{j=1}^n (\div_x B_j(u))^2\bigr]dtdx\ge 0,
\end{equation}
where $D^2_x f$ is the symmetric matrix of second order derivatives of $f$, and "$\cdot$" denotes the standard scalar multiplications of vectors or matrices (in particular, $A\cdot B=\Tr A^\top B$ for matrices $A,B$).

In the case of conservation laws (\ref{con}) Definition~\ref{def1} reduces to the known definition of entropy solutions
in the sense of S.\,N.~Kruzhkov \cite{Kr}. Taking in (\ref{entr}) $\eta(u)=\pm u$, we deduce that
$$
u_t+\div_x \varphi(u)-D^2_x\cdot A(u)=0 \ \mbox{ in } \D'(\Pi),
$$
that is, an e.s. $u$ is a weak solution of (\ref{1c}).

The main results of the paper are contained in the following four theorems.

\begin{theorem}\label{th2}
There exist the unique largest e.s. $u_+(t,x)$ and the smallest e.s. $u_-(t,x)$ of the problem (\ref{1}), (\ref{2}).  Moreover, $u_-(t,x)\le u_+(t,x)$.
\end{theorem}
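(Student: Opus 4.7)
My plan is to construct $u_+$ as the essential supremum of the set $\mathcal{E}$ of all e.s.\ of \eqref{1}, \eqref{2}, and $u_-$ symmetrically as the essential infimum; the inequality $u_-\le u_+$ then follows, since both bracket every element of (nonempty) $\mathcal{E}$. Three ingredients are needed. First, $\mathcal{E}\neq\emptyset$, which I would invoke from a vanishing-viscosity construction: add $-\varepsilon\Delta u$ to \eqref{1}, derive the uniform bound $\|u^\varepsilon\|_\infty\le\|u_0\|_\infty$ together with a uniform $L^2_{loc}$ estimate on $\nabla B_j(u^\varepsilon)$ (the latter furnishes \eqref{pr} in the limit), and extract a subsequential limit obeying Definition~\ref{def1}. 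Second, $\mathcal{E}$ is upward directed: if $u_1,u_2\in\mathcal{E}$ then the pointwise maximum $w=\max(u_1,u_2)$ is again in $\mathcal{E}$. Third, a countable selection argument delivers the largest element.

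The heart of the proof is the second ingredient. I would establish it by a Kruzhkov--Carrillo doubling of variables applied to the pair $(u_1,u_2)$: write \eqref{entrI} for $u_1(t,x)$ against a symmetric test function $f(t,x,s,y)\ge 0$ with $\eta\in C^2(\R)$ a convex approximation of $r\mapsto(r-u_2(s,y))^+$; write the analogue for $u_2(s,y)$ with $\eta$ approximating $r\mapsto(u_1(t,x)-r)^+$; add and let the mollifier concentrate $(s,y)\to(t,x)$. The chain-rule identity \eqref{cr} is what lets the second-order diffusion terms and the parabolic dissipation squares appearing in \eqref{entr} combine coherently across the two copies of the equation, producing in the limit the anisotropic Kato inequality
\[
(u_1-u_2)^+_t + \div_x\bigl[\chi_{\{u_1>u_2\}}(\varphi(u_1)-\varphi(u_2))\bigr] - D^2_x\cdot (A(u_1)-A(u_2))^+ \le 0
\]
in $\D'(\Pi)$. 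Setting $w=u_2+(u_1-u_2)^+=\max(u_1,u_2)$, I would then verify conditions (i)--(iv) of Definition~\ref{def1} for $w$: the $L^2_{loc}$ regularity of $\div_x B_j(w)$ and the chain rule (ii) are preserved by this truncation of two functions already possessing these properties; (iv) is immediate from $u_{10}=u_{20}=u_0$; the entropy inequality (iii) for $w$ with arbitrary convex $\eta$ follows by combining the Kato inequality just derived (and its analogue for $(u_2-u_1)^+$) with the entropy inequality already satisfied by $u_2$.

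The selection is then standard. Fix a countable family $\{\rho_m\}\subset C_0^\infty(\Pi)$, $\rho_m\ge 0$, dense among nonnegative compactly supported test functions, set $M_m=\sup_{u\in\mathcal{E}}\int_\Pi u\rho_m\,dtdx$, and pick $u_{m,k}\in\mathcal{E}$ with $\int u_{m,k}\rho_m\,dtdx\to M_m$ as $k\to\infty$. By upward directedness the sequence $v_k=\max(u_{1,1},\dots,u_{k,k})$ lies in $\mathcal{E}$ and is monotone increasing, so $v_k\uparrow u_+$ a.e.; monotone passage to the limit in Definition~\ref{def1}, together with weak $L^2_{loc}$ convergence of $\div_x B_j(v_k)$ supplied by the uniform version of \eqref{pr} and the lower semicontinuity of the dissipation term $\eta''\sum_j(\div_x B_j)^2$ in \eqref{entrI}, gives $u_+\in\mathcal{E}$; the density of $\{\rho_m\}$ forces $u\le u_+$ a.e.\ for every $u\in\mathcal{E}$. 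The smallest e.s.\ $u_-$ is produced by the symmetric construction.

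The principal obstacle is the Kato inequality itself. In the degenerate anisotropic parabolic setting the doubling of variables must simultaneously accommodate the hyperbolic flux and the second-order diffusion, and it is the parabolic dissipation squares in \eqref{entr} that must cancel the cross terms generated by the diffusion after symmetrization --- this cancellation is precisely what condition (ii) of Definition~\ref{def1} is designed to supply, and it is by far the most delicate step, considerably harder than the purely hyperbolic Kruzhkov theory. Once this inequality is in hand, both the upward directedness of $\mathcal{E}$ and the subsequent selection argument are essentially soft.
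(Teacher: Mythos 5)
Your architecture (take $u_+$ to be the supremum of the class $\mathcal{E}$ of all e.s., with upward directedness of $\mathcal{E}$ as the engine) is genuinely different from the paper's, and it founders on precisely that second ingredient: the claim that $w=\max(u_1,u_2)$ is again an entropy \emph{solution}. What the doubling-of-variables (Kato) inequality (\ref{Kato}) combined with the weak formulation of the equation for $u_2$ actually delivers is that $w=u_2+(u_1-u_2)^+$ is an entropy \emph{subsolution}: one obtains (\ref{entr}) for $w$ only for the one-sided entropies $\eta(u)=(u-k)^+$, since $(w-k)^+=\max\bigl((u_1-k)^+,(u_2-k)^+\bigr)$ and the maximum operation preserves subsolution-type inequalities. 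The supersolution half, i.e.\ (\ref{entr}) for $\eta(u)=(k-u)^+$, does \emph{not} follow from "combining the Kato inequality with the entropy inequality already satisfied by $u_2$": here $(k-w)^+=\min\bigl((k-u_1)^+,(k-u_2)^+\bigr)$, and a maximum of two entropy solutions need not be an entropy supersolution (exactly as in the viscosity-solution calculus, where max preserves subsolutions but not supersolutions). You cannot dismiss this as vacuous: in the present setting of merely continuous flux and degenerate diffusion uniqueness genuinely fails (the paper cites the Kruzhkov--Panov examples), so $\mathcal{E}$ may contain many elements, and your argument as written produces at best the largest entropy subsolution with no mechanism to show it solves the problem. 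Your countable selection step also silently relies on this unproven directedness to make $v_k=\max(u_{1,1},\dots,u_{k,k})$ an element of $\mathcal{E}$.

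The paper avoids directedness altogether by perturbing the \emph{initial data} from above: it sets $u_{0r}=u_0$ for $|x|\le r$ and $u_{0r}=b_r$ for $|x|>r$, with $b_r$ strictly decreasing and $b_r>\esssup u_0$, takes e.s.\ $u_r$ with these data, and shows both that $u_{r+1}\le u_r$ and that $u\le u_r$ for \emph{every} e.s.\ $u$ of the original problem. The tool is Lemma~\ref{lem3}, a \emph{conditional} comparison principle whose hypothesis --- finite measure of $\{u_1>u_2\}$ in each slab $(0,T)\times\R^n$ --- is verified via the $L^1$-estimate (\ref{L1}) of Proposition~\ref{pro2} and Chebyshev's inequality, using $u\le\esssup u_0<b_{r+1}<b_r$. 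That finiteness hypothesis is not decoration: with merely continuous flux the unrestricted global comparison principle is false, so one cannot integrate (\ref{Kato}) over all of $\R^n$ without such a localization. The monotone limit $u_+=\inf_r u_r$ is then shown to be an e.s.\ by essentially the limit passage you describe (uniform $L^2_{loc}$ bounds on $\div_x B_j(u_r)$, weak convergence, lower semicontinuity of the dissipation term), so that part of your proposal is sound. If you wish to salvage your approach, the natural repair is to work with entropy subsolutions dominated by a suitable barrier and then prove the supremum is a solution --- but that is the paper's construction in disguise.
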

The largest and the smallest e.s. exhibit $L^1$-contraction property. More precisely, the following result holds.

\begin{theorem}\label{th2contr}
Let $u_{1+}$, $u_{2+}$ be the largest e.s. of (\ref{1}), (\ref{2}) with initial data $u_{10}$, $u_{20}$. Then for a.e. $t>0$
$$
\int_{\R^n}(u_{1+}(t,x)-u_{2+}(t,x))^+dx\le\int_{\R^n}(u_{10}(x)-u_{20}(x))^+dx,
$$
where we use the standard notation $z^+=\max(z,0)$. The analogous property is valid the smallest e.s. $u_{1-}$ and $u_{2-}$: for a.e. $t>0$
$$
\int_{\R^n}(u_{1-}(t,x)-u_{2-}(t,x))^+dx\le\int_{\R^n}(u_{10}(x)-u_{20}(x))^+dx.
$$
\end{theorem}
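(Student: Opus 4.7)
The plan is to deduce the Kato-type inequality for the extremal entropy solutions from the analogous inequality for regularized problems with Lipschitz flux, where the classical uniqueness/contraction theory of Carrillo and Chen--Perthame applies directly.

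\textbf{Step 1 (approximation scheme).} From the construction underlying Theorem~\ref{th2}, I expect the largest e.s.\ $u_{i+}$ with initial datum $u_{i0}$ to be obtained as an a.e.\ limit of e.s.\ $u_i^\nu$ of the regularized Cauchy problems
\begin{equation*}
u_t+\div_x(\varphi^\nu(u)-a(u)\nabla_x u)=0,\qquad u(0,\cdot)=u_{i0},\quad i=1,2,
\end{equation*}
where $\varphi^\nu\in C^1(\R,\R^n)$ is a mollification of $\varphi$ with $\varphi^\nu\to\varphi$ locally uniformly, and the \emph{same} sequence $\{\varphi^\nu\}$ is used for both indices $i=1,2$. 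Since $\varphi^\nu$ is now Lipschitz on the range of the approximate solutions, each regularized problem has a unique e.s., and the map (initial datum)$\mapsto$(e.s.) is well defined.

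\textbf{Step 2 (contraction at the regularized level).} For the regularized problem, the standard doubling-of-variables argument — extended to the degenerate parabolic anisotropic setting by Carrillo and Chen--Perthame — gives the Kato inequality
\begin{equation*}
\int_{\R^n}(u_1^\nu(t,x)-u_2^\nu(t,x))^+\,dx \le \int_{\R^n}(u_{10}(x)-u_{20}(x))^+\,dx
\end{equation*}
for a.e.\ $t>0$, uniformly in $\nu$. The key analytical ingredient is the chain rule for $T_g$ applied to $\eta(u)=(u-k)^+$ together with control of the diffusion term via the squares $(\div_x B_j(u))^2$, which at the regularized level is classical.

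\textbf{Step 3 (passage to the limit).} Extracting a diagonal subsequence along which $u_i^\nu\to u_{i+}$ a.e.\ in $\Pi$, we have $(u_1^\nu-u_2^\nu)^+\to(u_{1+}-u_{2+})^+$ a.e., and Fatou's lemma (applied in $x$ for a.e.\ $t$) yields
\begin{equation*}
\int_{\R^n}(u_{1+}(t,x)-u_{2+}(t,x))^+\,dx \le \liminf_{\nu\to\infty}\int_{\R^n}(u_1^\nu(t,x)-u_2^\nu(t,x))^+\,dx,
\end{equation*}
which combined with Step~2 is the desired bound. The smallest e.s.\ case is handled by exactly the same argument, using the symmetric approximation that selects $u_{i-}$ in the limit (or, equivalently, applying the result for the largest e.s.\ to the flipped problem $\tilde u=-u$, with $\tilde\varphi(\tilde u)=-\varphi(-\tilde u)$, $\tilde a(\tilde u)=a(-\tilde u)$).

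\textbf{Main obstacle.} The genuinely delicate point is the compatibility in Step~1: we need a single sequence of regularizations $\{\varphi^\nu\}$ to produce the \emph{largest} e.s.\ simultaneously for both initial data. This is not automatic because $u_+$ is defined by a maximality property that is a priori initial-data dependent. It must be justified from the proof of Theorem~\ref{th2} that the approximation scheme there gives a limit with an intrinsic characterization (e.g., as the pointwise supremum of a monotone family of approximants) that is preserved for every initial datum. Once this is in place, the contraction inequality propagates from the approximate level to the limit by the routine Fatou argument above.
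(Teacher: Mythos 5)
Your proposal has a genuine gap, and it is exactly the point you yourself flag as the ``main obstacle'': nothing in your scheme identifies the limit of the flux-regularized solutions $u_i^\nu$ with the largest e.s.\ $u_{i+}$. In this paper the largest e.s.\ is not produced by regularizing $\varphi$ at all; it is produced by a monotone perturbation of the \emph{initial data} (setting $u_{0r}=b_r>\esssup u_0$ outside the ball $|x|<r$, with $b_r$ strictly decreasing), and the maximality of the limit is then proved by comparing an arbitrary e.s.\ with each approximant. Your regularization in $\varphi$ yields no monotone family and no intrinsic characterization of its limit, so even if Steps 2--3 went through they would give the contraction for \emph{some} pair of e.s.\ of the original problem, not for $u_{1+}$ and $u_{2+}$. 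Since uniqueness genuinely fails for merely continuous fluxes with degenerate diffusion (the paper cites counterexamples), ``some e.s.'' is not enough, and the obstacle you defer is in fact the whole theorem.

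Step 2 is also weaker than you suggest. The Kato inequality from doubling of variables already holds for the \emph{original}, non-regularized problem --- the paper quotes it as (\ref{Kato}) from Chen--Perthame --- but it is only a local differential inequality; to integrate it over all of $\R^n$ one must kill the flux and diffusion error terms coming from the spatial cutoff. With a merely continuous flux there is no finite speed of propagation, and with a degenerate, merely bounded measurable $a(u)$ you are not in the Carrillo/Chen--Perthame uniqueness class even after mollifying $\varphi$ (mollifying $a$ as well would change the operators $B_j$ and hence the very notion of e.s.\ in Definition~\ref{def1}). The paper's route around this is Lemma~\ref{lem3} (after Andreianov--B\'enilan--Kruzhkov): the cutoff errors are of order $l^{-1}\meas\{u_1>u_2\}$, so the $L^1$ contraction holds for any pair of e.s.\ for which the set $\{u_1>u_2\}$ has finite measure in each strip $(0,T)\times\R^n$. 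The modified data $u_{1r}^0$, $u_{2r}^0$ (equal to constants $b_r$ and $b_r+1$ outside $|x|<r$) are engineered precisely so that, by the maximum principle, Chebyshev's inequality and Proposition~\ref{pro2}, this finite-measure hypothesis holds for the approximants; Lemma~\ref{lem3} then gives the contraction at the approximate level and Fatou's lemma passes it to the limit. Your final Fatou step coincides with the paper's, but neither of the two steps feeding it is available as you state them.
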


On the base of Theorem~\ref{th2} we also establish the following \textit{comparison principle}.

\begin{theorem}\label{th4}
Let functions $u=u(t,x)$, $v=v(t,x)$ be e.s. of (\ref{1}), (\ref{2}) with corresponding initial data $u_0(x)$, $v_0(x)$, and $u_0(x)\le v_0(x)$. If at least one of the initial functions is periodic then $u(t,x)\le v(t,x)$ a.e. in $\Pi$.
\end{theorem}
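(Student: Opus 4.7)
Assume without loss of generality that $v_0$ is $L$-periodic for some lattice $L\subset\R^n$ (the case of periodic $u_0$ is symmetric and handled at the end). Let $u_\pm, v_\pm$ denote the extremal e.s.\ given by Theorem~\ref{th2}. The plan is to establish the chain
\[
u\le u_+\le v_+=v_-\le v\quad\text{a.e.\ in }\Pi,
\]
whose outer inequalities come directly from Theorem~\ref{th2} and whose inequality $u_+\le v_+$ follows from Theorem~\ref{th2contr} applied to the largest e.s., since $(u_0-v_0)^+\equiv 0$ forces the right-hand side there to vanish. Everything therefore reduces to the middle identity $v_+=v_-$, i.e., to uniqueness of the entropy solution for the periodic initial datum.

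\textbf{Periodicity of $v_\pm$ and uniqueness.} Because the coefficients of (\ref{1}) do not depend on $x$, for every $l\in L$ the shift $(t,x)\mapsto v_+(t,x+l)$ is again a largest e.s.\ with initial datum $v_0(\cdot+l)=v_0$; by the uniqueness part of Theorem~\ref{th2} it must equal $v_+$, which proves $L$-periodicity of $v_+$. The same argument gives $L$-periodicity of $v_-$. Choosing $\eta(u)=\pm u$ in (\ref{entr}) (the production term vanishes since $\eta''\equiv 0$) shows that $v_+$ and $v_-$ are both weak solutions of (\ref{1c}). Consequently $w:=v_+-v_-\ge 0$ satisfies in $\D'(\Pi)$ the conservation law
\[
w_t+\div_x\bigl(\varphi(v_+)-\varphi(v_-)\bigr)-D^2_x\cdot\bigl(A(v_+)-A(v_-)\bigr)=0,
\]
with all coefficients $L^\infty$ and $L$-periodic in $x$. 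Test this identity against $\phi(t)\psi_R(x)$, where $\phi\in C_0^\infty(\R_+)$ and $\psi_R\in C_0^\infty(\R^n)$ is a cutoff with $\psi_R\equiv 1$ on $\{|x|\le R\}$, $\psi_R\equiv 0$ outside $\{|x|\le R+1\}$, and $|\nabla\psi_R|,|D^2\psi_R|$ bounded independently of $R$. The flux and diffusion contributions are $O(R^{n-1})$ by the $L^\infty$ bounds and the annular support of $\nabla\psi_R$, $D^2\psi_R$. By $L$-periodicity and boundedness of $w$, one has $\int_{\R^n}\psi_R(x)w(t,x)dx=\bar w(t)\meas(B_R)+O(R^{n-1})$, where $\bar w(t)$ is the mean of $w(t,\cdot)$ over a fundamental cell of $L$. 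Dividing the weak identity by $\meas(B_R)\sim R^n$ and letting $R\to\infty$ yields $\int_{\R_+}\phi'(t)\bar w(t)dt=0$ for all admissible $\phi$, so $\bar w$ is (distributionally, hence a.e.) constant on $\R_+$. The initial trace condition (iv) applied to both $v_\pm$ forces $\bar w(t_k)\to 0$ along some sequence $t_k\to 0^+$; hence $\bar w\equiv 0$, and $w\ge 0$ then gives $w=0$ a.e., i.e., $v_+=v_-$.

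\textbf{Conclusion and main obstacle.} Assembling the chain proves $u\le v$ a.e.\ in $\Pi$. The symmetric case is handled identically: if $u_0$ is periodic, the same reasoning shows $u_+=u_-$, so $u=u_\pm$, and Theorem~\ref{th2contr} applied to the smallest e.s.\ yields $u=u_-\le v_-\le v$. The principal technical hurdle is the uniqueness step, where the distributional conservation law on $\Pi$ must be converted into an $L^1$-balance on a period; the cancellation of $O(R^n)$ bulk terms against $O(R^{n-1})$ boundary errors is the decisive point, and relies crucially on both the $L$-periodicity of $v_\pm$ and their boundedness, not on any regularity beyond what Definition~\ref{def1} already provides.
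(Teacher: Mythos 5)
Your proposal is correct and follows essentially the same route as the paper: the paper's proof of Theorem~\ref{th4} likewise reduces everything to the coincidence of the largest and smallest e.s.\ for the periodic datum (its Theorem~\ref{th3}, proved exactly by your periodicity-plus-conservation-of-the-mean argument, with a scaled bump $\beta(x/k)$ in place of your cutoff $\psi_R$) and then chains the extremal solutions via Theorem~\ref{th2} and the contraction of Theorem~\ref{th2contr}. The only cosmetic difference is that the paper takes $u_0$ periodic and uses the chain $u=u_-\le v_-\le v$, while you take $v_0$ periodic and use $u\le u_+\le v_+=v_-\le v$.
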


The last our result is about the large time decay property of e.s. in the case when initial function vanishes at infinity in the sense of measure. More precisely we assume that
\begin{equation}\label{van}
\forall\lambda>0 \quad \meas \{ \ x\in\R^n: \ |u_0(x)|>\lambda \ \}<+\infty
\end{equation}
(in particular, this requirement is satisfied if $u_0(x)\to 0$ as $x\to\infty$ and also if
$u_0\in L^p(\R^n)$, $0<p<\infty$). It is not difficult to verify that (\ref{van}) is equivalent to the condition
that $|u_0|$ has the null strong mean value
\begin{equation}\label{SM}
\lim_{|I|\to\infty}\frac{1}{|I|}\int_I|u_0(x)|dx=0,
\end{equation}
where $I$ runs over Lebesgue measurable subsets of $\R^n$ with finite measure $|I|=\meas I$.

In fact, assuming (\ref{van}), we denote $A_\lambda=\{ \ x\in\R^n: \ |u_0(x)|>\lambda \ \}$, $\lambda>0$, so that $|A_\lambda|<+\infty$. Then for every measurable set $I$ of finite measure
$$
\int_I|u_0(x)|dx=\int_{I\setminus A_\lambda}|u_0(x)|dx+\int_{I\cap A_\lambda}|u_0(x)|dx\le \lambda|I|+\|u_0\|_\infty|A_\lambda|.
$$
Therefore,
$$
\limsup_{|I|\to\infty}\frac{1}{|I|}\int_I|u_0(x)|dx\le\lim_{|I|\to\infty}(\lambda+\|u_0\|_\infty|A_\lambda|/|I|)=\lambda.
$$
Since $\lambda>0$ is arbitrary, we deduce (\ref{SM}).
Conversely, suppose that (\ref{SM}) holds. We need to prove (\ref{van}). Assuming that (\ref{van}) is violating, we can find $\lambda>0$ such that the set $A_\lambda$ has infinite measure. Then we can choose the sequence of measurable subsets
$I_m\subset A_\lambda$ such that $|I_m|=m$, $m\in\N$. Obviously,
$$
\frac{1}{|I_m|}\int_{I_m}|u_0(x)|dx\ge\lambda
$$
while $|I_m|=m\to\infty$ as $m\to\infty$. But this contradicts (\ref{SM}), and we conclude that (\ref{van}) is satisfied.

\medskip
Assume also that the following \textit{nonlinearity-diffusivity} condition is satisfied
\begin{align}\label{GN}
\mbox{ On each interval } (a,0), (0,b) \mbox{ it cannot happen that the vector }\nonumber\\ \varphi(u) \mbox{ is affine and the matrix } a(u)\equiv 0 \mbox{ on this interval}.
\end{align}
In (\ref{GN}) and in the sequel the relation $p(u)\equiv 0$ on an interval, where $p(u)$ is a measurable function (or a vector function),  means that $a(u)=0$ almost everywhere on this interval.
Condition (\ref{GN}) simply means that our equation is not a first-order linear equation on each interval $u\in (a,0)$ or
$u\in (0,b)$.

To study the decay property, we introduce the topology on $L^\infty(\R^n)$ stronger than one induced by $L^1_{loc}(\R^n)$. This topology is generated by the following shift-invariant norm
\begin{equation}\label{normX}
\|u\|_X=\sup_{y\in\R^n} \int_{|x-y|<1} |u(x)|dx
\end{equation}
(where we denote by $|z|$ the Euclidean norm of a finite-dimensional vector $z$).
It is not difficult to verify that norm (\ref{normX}) is equivalent to each of more general norms
\begin{equation}\label{normV}
\|u\|_V=\sup_{y\in\R^n} \int_{y+V} |u(x)|dx,
\end{equation}
where $V$ is any bounded open set in $\R^n$ (the original norm $\|\cdot\|_X$ corresponds to the unit ball $|x|<1$). For the sake of completeness we prove this result in Lemma~\ref{lemA} below.

Our last result is the following decay property.

\begin{theorem}\label{thD}
If $u(t,x)$ is an e.s. of (\ref{1}), (\ref{2}) then, under assumptions (\ref{van}), (\ref{GN}),
\begin{equation}\label{dec}
\lim_{t\to+\infty}\|u(t,\cdot)\|_X=0.
\end{equation}
\end{theorem}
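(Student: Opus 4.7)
The plan is to establish the decay for the largest entropy solution $u_+$ (Theorem~\ref{th2}) and then deduce the general case from the sandwich $u_-\le u\le u_+$ valid for any e.s.\ $u$; a symmetric argument handles $u_-$. The strategy combines Theorem~\ref{th2contr} applied against constant entropy solutions, which yields uniform $L^1$-excess bounds, with an $\omega$-limit extraction along $t_k\to\infty$, where the nonlinearity-diffusivity condition~(\ref{GN}) supplies the strong $L^1_{loc}$-compactness needed to identify the limit.

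The first step is to test Theorem~\ref{th2contr} against the constant entropy solution $\epsilon$, and symmetrically against $-\epsilon$, to obtain
$$
\int_{\R^n}(u_+(t,x)-\epsilon)^+\,dx\le\int_{\R^n}(u_0(x)-\epsilon)^+\,dx,\qquad \int_{\R^n}(-\epsilon-u_+(t,x))^+\,dx\le\int_{\R^n}(-\epsilon-u_0(x))^+\,dx.
$$
By~(\ref{van}) both right-hand sides are finite, so $(|u_+(t,\cdot)|-\epsilon)^+\in L^1(\R^n)$ with norm monotone non-increasing in $t$. Since $|u_+|\le\epsilon+(|u_+|-\epsilon)^+$, we get
$$
\|u_+(t,\cdot)\|_X\le C\epsilon+\sup_{y\in\R^n}\int_{|x-y|<1}(|u_+(t,x)|-\epsilon)^+\,dx,
$$
so it suffices to show the second term tends to zero as $t\to\infty$ for each fixed $\epsilon>0$, after which letting $\epsilon\to 0$ closes the argument.

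To do this I would argue by contradiction. Suppose there exist $t_k\to\infty$ and $y_k\in\R^n$ with $\int_{|x-y_k|<1}(|u_+(t_k,x)|-\epsilon)^+\,dx\ge\delta>0$, and consider the space-time shifts $u_k(t,x)=u_+(t+t_k,x+y_k)$, which are again e.s.\ of~(\ref{1}) satisfying the same uniform $L^\infty$ and excess bounds as $u_+$. The nonlinearity-diffusivity hypothesis~(\ref{GN}) is exactly what is needed to ensure strong $L^1_{loc}(\Pi)$-precompactness of bounded sequences of entropy solutions, through the $H$-measure / kinetic compactness machinery developed in the author's previous work on~(\ref{1}); extracting a subsequence, I obtain a limit $v(t,x)$ which is itself an e.s.\ of~(\ref{1}). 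The limit $v$ inherits $(|v(t,\cdot)|-\epsilon')^+\in L^1(\R^n)$ for every $\epsilon'>0$, so $v(t,\cdot)$ satisfies~(\ref{van}) and has null strong mean value for each $t$; moreover, the uniform $L^1$ lower bound on the unit ball survives in the limit, yielding $\int_{|x|<1}(|v(0,x)|-\epsilon)^+\,dx\ge\delta/2$.

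The decisive step, and the main obstacle, is the rigidity statement $v\equiv 0$. Because $t\mapsto\int_{\R^n}(u_+(t,\cdot)-\epsilon')^+\,dx$ is non-increasing and passes to its limit along $t_k$, the analogous functional for $v$ is constant on its whole time evolution, which forces the dissipation term in the entropy inequality~(\ref{entr}) applied to a convex regularization of $(u-\epsilon')^+$ to vanish identically on $v$; combined with~(\ref{GN}), which rules out purely affine flux with vanishing diffusion on any interval, this forces $v$ to be spatially constant on $\{|v|>\epsilon'\}$. The null strong mean value of $v$ then excludes any nonzero constant value, so $|v|\le\epsilon'$ almost everywhere; letting $\epsilon'\to 0$ gives $v\equiv 0$, in contradiction with the preserved lower bound $\delta/2$. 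The technical heart of the proof lies precisely in this rigidity argument combining zero entropy-dissipation, the nonlinearity-diffusivity condition, and the vanishing-at-infinity of $v$.
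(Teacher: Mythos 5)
Your proposal diverges from the paper's argument at the decisive point, and the step it leans on does not hold under the hypotheses of the theorem. The claimed strong $L^1_{loc}(\Pi)$-precompactness of the shifted sequence $u_k(t,x)=u_+(t+t_k,x+y_k)$ ``through the $H$-measure / kinetic compactness machinery'' would require a genuine nonlinearity--diffusivity condition on the whole range of the solutions. But (\ref{GN}) is only a statement about intervals $(a,0)$ and $(0,b)$ \emph{adjacent to the origin}: it is perfectly compatible with $\varphi$ being affine and $a\equiv 0$ on some interval $(c,d)$ with $0<c<d$, on which the equation is a linear transport equation and bounded sequences of entropy solutions oscillate without any strong compactness. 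The uniform excess bounds $\int(|u_+(t,\cdot)|-\epsilon)^+dx\le\const$ control only the measure of superlevel sets, not oscillations of $u_k$ with values in such a linearity interval, so no limit e.s.\ $v$ can be extracted. Two further steps are also unsupported: (a) the identification of $\int_{\R^n}(v(t,\cdot)-\epsilon')^+dx$ with $\lim_{t\to\infty}\int_{\R^n}(u_+(t,\cdot)-\epsilon')^+dx$ fails because the convergence is only local and mass can escape to infinity (Fatou gives only ``$\le$'', which does not force the time-constancy you need to kill the dissipation); and (b) the rigidity claim that zero entropy dissipation plus (\ref{GN}) makes $v$ spatially constant on $\{|v|>\epsilon'\}$ is asserted, not proved, and again would need nondegeneracy at the values actually taken by $v$, not just near $0$.

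The paper's proof is built precisely to avoid needing any such global compactness or rigidity. It constructs $rL$-periodic envelopes $v_{0r}^\pm(x)=\sup_{e\in L}u_0(x+re)$, $\inf_{e\in L}u_0(x+re)$ sandwiching $u_0$, shows via (\ref{van}) that their mean values tend to $0$ as $r\to\infty$, and then shifts them so that the new means $B_r^\pm$ lie in the nondegeneracy set $F$ while still tending to $0$ --- this is exactly where (\ref{GN}) enters, through Lemma~\ref{lemC} ($\sup F_-=\inf F_+=0$) and the choice of the lattice in Lemma~\ref{lemB}. The periodic decay Theorem~\ref{thM} is then applied to the periodic e.s.\ $u_r^\pm$; crucially it requires nondegeneracy only in a neighbourhood of the mean value $B_r^\pm$, which is what the construction guarantees. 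Finally the comparison principle of Theorem~\ref{th4} (valid because one of the two data is periodic) sandwiches $u$ between $u_r^-$ and $u_r^+$ and gives $\limsup_{t\to\infty}\|u(t,\cdot)\|_X\le c(|B_r^-|+|B_r^+|)\to 0$. If you want to salvage a contradiction/compactness scheme, you would have to either strengthen (\ref{GN}) to a nondegeneracy condition on every interval (which changes the theorem) or find a substitute for the periodic-majorant construction; as written, the proposal does not prove the stated result.
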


Notice that condition (\ref{GN}) is precise. In fact, if it fails, there exists an interval $(a,0)$ or $(0,b)$ where the vector $\varphi(u)$ is affine while the diffusion matrix $a(u)\equiv 0$. Assume for definiteness that
$\varphi(u)=uc+d$, $a(u)\equiv 0$ on an interval $(0,b)$, $b>0$, where $c,d\in\R^n$. If initial function $u_0(x)$ is such that $0\le u_0(x)\le b$ then an e.s. of (\ref{1}), (\ref{2}) is the traveling wave $u(t,x)=u_0(x-tc)$. If $u_0\not=0$ this e.s. does not satisfy (\ref{dec}).

Observe also that in the case when $u_0\ge 0$ ($u_0\le 0$) an e.s. $u\ge 0$ (respectively, $u\le 0$) as well, by the maximum/minimum principle (see Corollary~\ref{cor1b} below) and the nonlinearity-diffusivity assumption (\ref{GN}) in Theorem~\ref{thD} can be relaxed, it is sufficient to require that the flux vector $\varphi(u)$ is not affine or $a(u)\not\equiv 0$ on any interval $(0,b)$ (respectively, on any interval $(a,0)$).

Concerning exactness of condition (\ref{SM}) in Theorem~\ref{thD}, remark that we cannot replace it by a weaker condition
that $|u_0|$ has the null standard mean value, i.e.,
\begin{equation}\label{WM}
\lim_{R\to+\infty}R^{-n}\int_{|x|<R}|u_0(x)|dx=0.
\end{equation}
Let us confirm this by the following simple example.

\begin{example}
In the case of one space variable we consider the Cauchy problem for Burgers' equation
\begin{equation}\label{burg}
u_t+\frac{1}{2}(u^2)_x=0, \quad u(0,x)=u_0(x),
\end{equation}
where $u_0(x)$ is the indicator function of the set $\bigcup_{n=1}^\infty [2^n,2^n+n]$.
Since measure of this set is infinite, condition (\ref{SM}) is violated but condition (\ref{WM}) holds. In fact,
for $2^n\le R<2^{n+1}$
$$R^{-1}\int_{-R}^R|u_0(x)|dx\le 2^{-n}\sum_{k=1}^n k\le n^22^{-n}\mathop{\to}_{n\to\infty} 0.$$
Let $u(t,x)$ be the unique e.s. of (\ref{burg}). By the comparison principle $u(t,x)\ge u_n(t,x)$,
where $u_n(t,x)$ is an e.s. for Burgers' equation with initial data being the indicator function of the segment
$[2^n,2^n+n]$. It is clear that $u_n(t,x)=U(t/n,(x-2^n)/n)$, where
$U(t,y)$ is the e.s. of Burgers' equation $u_t+\frac{1}{2}(u^2)_y=0$, with initial data $u(0,y)=\chi_{[0,1]}(y)$
being the indicator of the segment $[0,1]$. This solution is well known. We only need the fact that $U(t,y)=1$ for
$t<y<1+t/2$. This implies that $u_n(t,y)=1$ if $2^n+t<y<2^n+n+t/2$. Since length of the interval $(2^n+t,2^n+n+t/2)$ is more than $1$ if $t<2n-2$, we obtain that $\|u(t,\cdot)\|_X\ge \|u_n\|_X\ge 1$ if $t<2n-2$. By arbitrariness of $n\in\N$ we conclude that $\|u(t,\cdot)\|_X\ge 1$ for all $t\ge 0$ and decay property (\ref{dec}) is violated.
\end{example}

\section{Preliminaries}

It will be convenient to include the initial condition in the integral entropy inequality.
\begin{proposition}\label{pro1}
A function $u=u(t,x)\in L^\infty(\Pi)$ satisfying conditions (i), (ii) is an e.s. of (\ref{1}), (\ref{2})
if and only if for any convex function $\eta(u)\in C^2(\R)$ and each nonnegative test function $f=f(t,x)\in C_0^\infty(\bar\Pi)$, where $\bar\Pi=[0,+\infty)\times\R^n$,
\begin{align}\label{eint}
\int_\Pi \bigl[\eta(u)f_t+T_{\eta'}(\varphi)(u)\cdot\nabla_x f+T_{\eta'}(A)(u)\cdot D^2_x f-f\eta''(u)\sum_{j=1}^n (\div_x B_j(u))^2\bigr]dtdx \nonumber\\ + \int_{\R^n} \eta(u_0(x))f(0,x)dx\ge 0.
\end{align}
\end{proposition}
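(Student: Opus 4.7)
Let $u$ be an e.s., fix a nonnegative $f\in C_0^\infty(\bar\Pi)$ and a convex $\eta\in C^2(\R)$. Choose a smooth cutoff $\chi_\nu\in C^\infty(\R)$ with $\chi_\nu\equiv 0$ on $(-\infty,0]$, $\chi_\nu\equiv 1$ on $[\nu,\infty)$, and $0\le\chi_\nu\le 1$; then $\chi_\nu f\in C_0^\infty(\Pi)$ is an admissible test function in (\ref{entrI}). All terms apart from the one involving $f_t$ carry only $\chi_\nu$ (without time derivative), so by dominated convergence they pass, as $\nu\to 0^+$, to the corresponding integrals over $\Pi$. The extra contribution is
\[
\int_\Pi \eta(u(t,x))\,f(t,x)\,\chi_\nu'(t)\,dt\,dx,
\]
where $\chi_\nu'\ge 0$ is supported in $[0,\nu]$ with total mass $1$. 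Since $\esslim_{t\to 0}u(t,\cdot)=u_0$ in $L^1_{loc}(\R^n)$ by (iv), $\eta$ is locally Lipschitz, and $f$ is continuous up to $\{t=0\}$, the scalar map $t\mapsto\int_{\R^n}\eta(u(t,x))f(t,x)\,dx$ has essential limit $\int_{\R^n}\eta(u_0)f(0,x)\,dx$ at $t=0^+$; averaging against the probability measure $\chi_\nu'(t)\,dt$ preserves this limit, and (\ref{eint}) follows.

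\textbf{Reverse direction, entropy inequality.} Assume (i), (ii), and (\ref{eint}). Test functions with $f(0,\cdot)\equiv 0$ collapse (\ref{eint}) to (\ref{entrI}), recovering (iii). To obtain (iv), specialize $\eta(u)=\pm u$ in (\ref{eint}) to get the weak formulation
\[
\int_\Pi\bigl[uf_t+\varphi(u)\cdot\nabla_x f+A(u)\cdot D_x^2 f\bigr]\,dt\,dx+\int_{\R^n}u_0(x)f(0,x)\,dx=0
\]
for every $f\in C_0^\infty(\bar\Pi)$. Inserting $f(t,x)=\phi(x)\chi_\epsilon(t)$ with $\phi\in C_0^\infty(\R^n)$ and $\chi_\epsilon$ equal to $1$ on $[0,s]$ and vanishing on $[s+\epsilon,\infty)$, and sending $\epsilon\to 0^+$ via Lebesgue differentiation, gives, for a.e.\ $s>0$,
\[
\int_{\R^n}u(s,x)\phi(x)\,dx=\int_{\R^n}u_0(x)\phi(x)\,dx-\int_0^s\!\!\int_{\R^n}[\varphi(u)\cdot\nabla\phi+A(u)\cdot D_x^2\phi]\,dx\,dt.
\]
Hence $\int u(s,\cdot)\phi\,dx\to\int u_0\phi\,dx$ as $s\to 0^+$; combining $\|u\|_\infty<\infty$ with density of $C_0^\infty(\R^n)$ in $L^1(\R^n)$ upgrades this to weak-$*$ convergence of $u(s,\cdot)$ to $u_0$ in $L^\infty(\R^n)$.

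\textbf{Reverse direction, strong initial trace.} Apply the same cutoff procedure to (\ref{eint}) with the convex entropy $\eta(u)=u^2$, discarding the nonpositive dissipation term $-f\eta''(u)\sum_{j}(\div_x B_j(u))^2$. For every nonnegative $\phi\in C_0^\infty(\R^n)$ one obtains
\[
\esslimsup_{s\to 0^+}\int_{\R^n}u(s,x)^2\phi(x)\,dx\le\int_{\R^n}u_0(x)^2\phi(x)\,dx.
\]
Since $u_0\phi\in L^1(\R^n)$, the weak-$*$ convergence gives $\int u(s)u_0\phi\,dx\to\int u_0^2\phi\,dx$, and expanding $\int(u(s)-u_0)^2\phi\,dx$ forces $\esslimsup_{s\to 0^+}\int(u(s)-u_0)^2\phi\,dx\le 0$. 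Therefore $u(s,\cdot)\to u_0$ in $L^2_{loc}(\R^n)$, and a fortiori in $L^1_{loc}(\R^n)$, which is (iv).

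\textbf{Main obstacle.} The cutoff regularization in both directions is routine; the delicate point is the reverse implication, namely promoting the weak-$*$ initial trace delivered by the linear entropies $\pm u$ to the strong $L^1_{loc}$ convergence required in (iv). The convex quadratic entropy $u^2$ provides a matching norm upper bound (thanks to the favorable sign of the dissipation term for convex $\eta$), after which the classical ``weak convergence plus convergence of norms yields strong convergence'' principle closes the gap.
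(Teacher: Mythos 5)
Your argument is correct. The forward direction and the recovery of condition (iii) follow essentially the same path as the paper (the paper mollifies the Heaviside cut-off $\theta_r(t-t_0)$ at a Lebesgue point $t_0\in E$ and then sends $t_0\to 0$, while you cut off directly at $t=0$; one small slip is that your $\chi_\nu$ should vanish on a neighbourhood of $t=0$, not merely on $(-\infty,0]$, so that $\chi_\nu f$ is genuinely an admissible test function in $\D'(\Pi)$ — a trivially fixable point). Where you genuinely diverge from the paper is in recovering the strong initial trace (iv). The paper first establishes the one-sided bound $\limsup_{t_0\to 0}\int\eta(u(t_0,\cdot))h\,dx\le\int\eta(u_0)h\,dx$ for \emph{all} continuous convex $\eta$ (by continuity from $C^2$) and all $0\le h\in L^1$, then approximates $u_0$ in $L^\infty$ by a step function $v=\sum v_i\chi_{A_i}$ and applies the Kruzhkov entropies $|u-v_i|$ piecewise to squeeze $\int|u(t_0,\cdot)-u_0|h\,dx$ below $2\varepsilon\|h\|_1$. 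You instead use only two entropies: the linear ones $\pm u$ to get weak-$*$ convergence $u(s,\cdot)\rightharpoonup u_0$, and the quadratic one $u^2$ to get $\esslimsup\int u(s)^2\phi\le\int u_0^2\phi$, closing with the Hilbert-space identity $\int(u(s)-u_0)^2\phi=\int u(s)^2\phi-2\int u(s)u_0\phi+\int u_0^2\phi$. Both arguments are valid for bounded solutions. Yours is shorter and avoids both the continuity extension of the entropy class and the step-function approximation; the paper's is the more robust $L^1$-native argument, which transfers verbatim to settings (unbounded or merely $L^1$ data) where the quadratic entropy and the weak-$*$/norm-convergence trick are unavailable.
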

\begin{proof}
Let $E$ be a set of $t>0$ such that $(t,x)$ is a Lebesgue point of $u(t,x)$ for almost all $x\in\R^n$. It is rather well-known (see for example \cite[Lemma~1.2]{PaJHDE}) that $E$ is a set of full measure and $t\in E$ is a common Lebesgue point of the functions $t\to\int_{\R^n} u(t,x)b(x)dx$ for all $b(x)\in L^1(\R^n)$. Since every Lebesgue point of $u$ is also a Lebesgue point of $\eta(u)$ for an arbitrary function $\eta\in C(\R)$, we may replace $u$ in the above property by $\eta(u)$. We choose a function $\omega(s)\in C_0^\infty(\R)$, such that $\omega(s)\ge 0$, $\supp\omega\subset [0,1]$, $\int\omega(s)ds=1$, and define the sequences $\omega_r(s)=r\omega(rs)$, $\theta_r(s)=\int_{-\infty}^s\omega_r(\sigma)d\sigma=\int_{-\infty}^{rs}\omega(\sigma)d\sigma$, $r\in\N$. Obviously, the sequence $\omega_r(s)$ converges as $r\to\infty$ to the Dirac $\delta$-measure weakly in $\D'(\R)$ while the sequence $\theta_r(s)$ converges to the Heaviside function $\theta(s)$ pointwise and in $L^1_{loc}(\R)$. Notice that $0\le\theta_r(s)\le 1$.
We take $f=f(t,x)\in C_0^\infty(\bar\Pi)$, $f\ge 0$, and $t_0\in E$. Applying (\ref{entr}) to the nonnegative test function $\theta_r(t-t_0)f(t,x)\in C_0^\infty(\Pi)$, we arrive at the relation
\begin{align}\label{i1}
\int_{\Pi} \eta(u)\omega_r(t-t_0)fdtdx+ \int_\Pi \bigl[\eta(u)f_t+T_{\eta'}(\varphi)(u)\cdot\nabla_x f + \nonumber\\ T_{\eta'}(A)(u)\cdot D^2_x f-f\eta''(u)\sum_{j=1}^n (\div_x B_j(u))^2\bigr]\theta_r(t-t_0)dtdx\ge 0.
\end{align}
Since
$$
\int_{\Pi} \eta(u)\omega_r(t-t_0)fdtdx=\int_0^{+\infty}\left(\int_{\R^n} \eta(u(t,x))f(t,x)dx\right)\omega_r(t-t_0)dt
$$
and
$t_0$ is a Lebesgue point of the function $t\to\int_{\R^n} \eta(u(t,x))f(t,x)dx$, it follows from (\ref{i1}) in the limit as $r\to\infty$ that
\begin{align}\label{i2}
\int_{\R^n} \eta(u(t_0,x))f(t_0,x)dx+ \int_{(t_0,+\infty)\times\R^n}\bigl[\eta(u)f_t+T_{\eta'}(\varphi)(u)\cdot\nabla_x f + \nonumber\\ T_{\eta'}(A)(u)\cdot D^2_x f-f\eta''(u)\sum_{j=1}^n (\div_x B_j(u))^2\bigr]dtdx\ge 0.
\end{align}
Now we pass in (\ref{i2}) to the limit as $E\ni t_0\to 0$.
Since
$$|\eta(u(t,x))-\eta(u_0(x))|\le C|u(t,x)-u_0(x)|, \quad C=\const,$$
we obtain that
$$
\lim_{E\ni t_0\to 0}\int_{\R^n}\eta(u(t_0,x))dx=\int_{\R^n}\eta(u_0(x))f(0,x)dx,
$$
where we take into account initial condition (iv).
With the help of this relation, the desired inequality (\ref{eint}) follows from (\ref{i2}) in the limit as $E\ni t_0\to 0$.

\medskip
Conversely, assume that relation (\ref{eint}) holds. Taking in this relation a nonnegative test function $f\in C_0^\infty(\Pi)$, we obtain (\ref{entrI}), showing that entropy condition (iii) is satisfied.
It only remains to prove initial requirement (iv) from Definition~\ref{def1}.
We fix a nonnegative function $h(x)\in C_0^\infty(\R^n)$, and apply (\ref{eint}) to the test function $f=h(x)(1-\theta_r(t-t_0))$, where $t_0\in E$.
As a result, we obtain
\begin{align*}
\int_{\R^n} \eta(u_0(x))h(x)dx-\int_{\Pi} \eta(u(t,x))\omega_r(t-t_0)hdtdx +
\int_{(0,t_0+1/r)\times\R^n} \bigl[T_{\eta'}(\varphi)(u)\cdot\nabla h + \\ T_{\eta'}(A)(u)\cdot D^2 h\bigr](1-\theta_r(t-t_0))dtdx\ge \int_\Pi f\eta''(u)\sum_{j=1}^n (\div_x B_j(u))^2dtdx\ge 0.
\end{align*}
Passing in this relation to the limit as $r\to\infty$, we arrive at the relation
\begin{align*}
\int_{\R^n} \eta(u_0(x))h(x)dx-\int_{\R^n} \eta(u(t_0,x))h(x)dx + \\
\int_{(0,t_0)\times\R^n} \bigl[T_{\eta'}(\varphi)(u)\cdot\nabla h + T_{\eta'}(A)(u)\cdot D^2 h\bigr]dtdx\ge 0,
\end{align*}
which implies in the limit as $E\ni t_0\to 0$ that
\begin{equation}\label{i3}
\limsup_{E\ni t_0\to 0} \int_{\R^n}\eta(u(t_0,x))h(x)dx\le \int_{\R^n} \eta(u_0(x))h(x)dx.
\end{equation}
By the continuity arguments, (\ref{i3}) remains valid for all convex entropies $\eta\in C(\R)$ (in particular, for $\eta(u)=|u-k|$, $k\in\R$) and all nonnegative functions $h(x)\in L^1(\R^n)$. We fix $\varepsilon>0$. Since $u_0(x)\in L^\infty(\R^n)$, we can find a step function
$v(x)=\sum_{i=1}^m v_i\chi_{A_i}(x)$, where $v_i\in\R$, $\chi_{A_i}(x)$ are indicator functions of measurable sets $A_i\subset\R^n$, such that $\|u_0-v\|_\infty<\varepsilon$.
The sets $A_i$, $i=1,\ldots,m,$ are supposed to be disjoint. In view of (\ref{i3})
\begin{align}\label{i4}
\limsup_{E\ni t_0\to 0} \int_{\R^n}|u(t_0,x)-v(x)|h(x)dx=\limsup_{E\ni t_0\to 0} \sum_{i=1}^m \int_{\R^n}|u(t_0,x)-v_i|\chi_{A_i}(x)h(x)dx \le \nonumber\\
\sum_{i=1}^m \int_{\R^n}|u_0(x)-v_i|\chi_{A_i}(x)h(x)dx= \int_{\R^n} |u_0(x)-v(x)|h(x)dx\le\varepsilon\|h\|_1.
\end{align}
Since $$|u(t_0,x)-u_0(x)|\le |u(t_0,x)-v(x)|+|v(x)-u_0(x)|<|u(t_0,x)-v(x)|+\varepsilon,$$
it follows from (\ref{i4}) that
$$
\limsup_{E\ni t_0\to 0} \int_{\R^n}|u(t_0,x)-u_0(x)|h(x)dx\le 2\varepsilon\|h\|_1
$$
and in view of arbitrariness of $\varepsilon>0$, we conclude that
$$
\lim_{E\ni t_0\to 0} \int_{\R^n}|u(t_0,x)-u_0(x)|h(x)dx=0
$$
for all $h(x)\in L^1(\R^n)$. Obviously, this implies that
$$
\esslim_{t\to 0+} (u(t,x)-u_0(x))=0 \ \mbox{ in } L^1_{loc}(\R^n)
$$
and completes the proof.
\end{proof}

We will need some a priory estimates of entropy solutions.

\begin{proposition}\label{pro2}
If $u=u(t,x)$ is an e.s. of (\ref{1}), (\ref{2}) then $\forall k\in\R$
\begin{equation}\label{L1}
\int_{\R^n}(u(t,x)-k)^+dx\le \int_{\R^n}(u_0(x)-k)^+dx
\end{equation}
for a.e. $t>0$.
\end{proposition}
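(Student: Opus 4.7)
The plan is to apply the integral entropy inequality from Proposition~\ref{pro1} with a smooth convex approximation $\eta_\varepsilon$ of $\eta(u)=(u-k)^+$ and a test function $f(t,x)=\phi_r(t)h_R(x)$ combining a time cutoff $\phi_r(t)=1-\theta_r(t-t_0)$ with a spatial cutoff $h_R(x)=h(x/R)$, where $h\in C_0^\infty(\R^n)$, $0\le h\le 1$, $h\equiv 1$ near the origin.

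First I construct $\eta_\varepsilon\in C^2(\R)$ convex with $\eta_\varepsilon\to (u-k)^+$ and $\eta_\varepsilon'\to\chi_{\{u>k\}}$ pointwise and uniformly on the range of $u$; a convenient choice is $\eta_\varepsilon(u)=\tfrac{1}{2}\bigl((u-k)+\sqrt{(u-k)^2+\varepsilon^2}-\varepsilon\bigr)$. Since $\eta_\varepsilon'$ is smooth and converges to the Heaviside $\chi_{(k,+\infty)}$, the defining formula (\ref{efl}) yields, in direct analogy with the example noted after (\ref{efl}) for $g=\sgn(\cdot-k)$, the limits $T_{\eta_\varepsilon'}(\varphi)(u)\to\chi_{\{u>k\}}(\varphi(u)-\varphi(k))$ and $T_{\eta_\varepsilon'}(A)(u)\to\chi_{\{u>k\}}(A(u)-A(k))$, uniformly on the range of $u$.

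Inserting $\eta_\varepsilon$ and $f$ into (\ref{eint}) and dropping the nonnegative contribution $\int f\eta_\varepsilon''(u)\sum_j(\div_x B_j)^2\,dt\,dx$ (which enters the left-hand side with a minus sign), then sending $\varepsilon\to 0$ by dominated convergence and $r\to\infty$ at a Lebesgue point $t_0$ of $t\mapsto\int_{\R^n}(u(t,x)-k)^+ h_R(x)\,dx$, one arrives at the local bound
\begin{equation*}
\int_{\R^n}(u(t_0,x)-k)^+ h_R(x)\,dx\le \int_{\R^n}(u_0(x)-k)^+ h_R(x)\,dx+I_1(R)+I_2(R),
\end{equation*}
where
\begin{align*}
I_1(R)&=\int_0^{t_0}\!\int_{\R^n}\chi_{\{u>k\}}(\varphi(u)-\varphi(k))\cdot\nabla h_R\,dx\,dt,\\
I_2(R)&=\int_0^{t_0}\!\int_{\R^n}\chi_{\{u>k\}}(A(u)-A(k))\cdot D^2 h_R\,dx\,dt.
\end{align*}

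The main obstacle is proving $I_1(R), I_2(R)\to 0$ as $R\to\infty$; once this is done, Fatou's lemma on the left and monotone convergence on the right give (\ref{L1}). The conclusion is vacuous unless $(u_0-k)^+\in L^1(\R^n)$, which I therefore assume. Since $\|\nabla h_R\|_\infty$ is of order $R^{-1}$ and $\|D^2 h_R\|_\infty$ of order $R^{-2}$, supported on the annulus $\{R\le|x|\le 2R\}$, while the integrands are merely bounded (using $u\in L^\infty$ and continuity of $\varphi,A$), the naive estimate yields only $O(R^{n-1})$ and $O(R^{n-2})$, insufficient even for $n=1$. To close the argument I would exploit that both integrands vanish on $\{u\le k\}$ and reduce to initial data with $(u_0-k)^+$ of compact support via an approximation and stability argument; for such data, replacing $\varphi$ outside $[-\|u\|_\infty,\|u\|_\infty]$ by a Lipschitz extension (which does not alter $u$) and invoking a finite-speed-of-propagation-type localization adapted to the degenerate parabolic regime controls the spread of $\{u(t,\cdot)>k\}$, so that the annular integrals vanish in the limit $R\to\infty$. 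Passing the approximation limit back recovers the claim in full generality.
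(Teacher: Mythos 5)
Your setup---a smooth convex approximation of $(u-k)^+$, the test function $\phi_r(t)h(x/R)$, dropping the nonnegative dissipation term, and arriving at the local bound with the annular error terms $I_1(R)$, $I_2(R)$---is sound and parallels the opening of the paper's argument. But the step you defer, namely $I_1(R),I_2(R)\to 0$, is where the entire difficulty of the proposition lives, and the resolution you sketch does not work. You propose to replace $\varphi$ by a Lipschitz extension \emph{outside} $[-\|u\|_\infty,\|u\|_\infty]$ and then invoke a finite-speed-of-propagation localization. Modifying the flux outside the range of $u$ is harmless but also useless: what matters is the regularity of $\varphi$ (and the degeneracy of $a$) \emph{on} the range of $u$, where $\varphi$ is merely continuous by hypothesis. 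For merely continuous fluxes finite speed of propagation genuinely fails---the domain of dependence can be infinite, which is precisely the phenomenon exhibited in \cite{KrPa1,KrPa2}, cited in the paper---so there is no control on the spread of $\{u(t,\cdot)>k\}$, and no approximation/stability argument starting from compactly supported $(u_0-k)^+$ can restore it.

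The paper closes this gap with a different device, in the spirit of \cite{BK}. One takes $\eta(u)=\int_{-\infty}^u\beta(s)\,ds$ with $\beta(s)=\min\bigl(((s/\delta)^+)^m,1\bigr)$, $m\ge n$, and exploits the pointwise bounds $|T_\beta(\varphi)(u)|\le C_1\beta(u)$, $|T_\beta(A)(u)|\le C_2\beta(u)$ together with $\beta(u)\le C\varepsilon^{-\frac{1}{m+1}}(\eta(u)+\varepsilon)$. Instead of a compactly supported cutoff one uses the exponentially decaying weight $\rho(N(t-t_0)+|x|-R)$ traveling at the $\varepsilon$-dependent speed $N=C(c+1)\varepsilon^{-\frac{1}{m+1}}$; with this choice the flux and diffusion contributions are absorbed by the term $N(\eta(u)+\varepsilon)\rho'$, yielding monotonicity in $t$ of $\int(\eta(u)+\varepsilon)\rho\,dx$ with no annular error at all. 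The price is an additive error $\varepsilon\int\rho(|x|-Nt_0-R)\,dx=O\bigl(\varepsilon(Nt_0+R+1)^n\bigr)=O\bigl(\varepsilon^{1-\frac{n}{m+1}}\bigr)$, which vanishes as $\varepsilon\to0$ precisely because $m+1>n$; the limits $\delta\to0$ and $R\to\infty$ then give (\ref{L1}). Your proposal contains neither this compensation mechanism nor any substitute for it, so as written it does not prove the proposition.
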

\begin{proof}
It follows from (\ref{entr}) that for each convex $\eta\in C^2(\R)$
\begin{equation}\label{en1-}
\eta(u)_t+\div_x T_{\eta'}(\varphi)(u)-D^2_x\cdot T_{\eta'}(A)(u)\le 0 \ \mbox{ in } \D'(\Pi).
\end{equation}
By the continuity (\ref{en1-}) remains valid for any convex $\eta$.

Let $M=\|u\|_\infty$. We observe that (\ref{L1}) is nontrivial only if $\int_{\R^n}(u_0(x)-k)^+dx<+\infty$, which will be assumed in the sequel. Let us consider firstly the case $k=0$.
We denote for $m\ge n$, $\delta>0$
$$
\alpha(s)=\min((s^+)^m,1), \ \beta(k)=\alpha(k/\delta), \ \eta(u)=\int_{-\infty}^u\beta(k)dk=
\left\{\begin{array}{lcr} 0 & , & u\le 0, \\ \frac{u^{m+1}}{(m+1)\delta^m} & , & 0<u\le\delta, \\ u-\frac{m\delta}{m+1} & , & u>\delta.\end{array}\right.
$$
In view of (\ref{en1-}) for $u=u(t,x)$
\begin{equation}\label{en1}
\eta(u)_t+\div_x\psi(u)-D^2_x\cdot H(u)\le 0 \ \mbox{ in } \D'(\Pi),
\end{equation}
where we denote
\begin{align*}
\psi(u)=T_{\beta}(\varphi)(u)=\int_0^u (\varphi(u)-\varphi(k))\beta'(k)dk\in C(\R,\R^n), \\
H(u)=T_{\beta}(A)(u)=\int_0^u (A(u)-A(k))\eta'(k)dk\in C(\R,\R^{n\times n}).
\end{align*}
Since the matrix $A'(u)=a(u)\ge 0$, we find that the matrix $A(u)-A(k)\ge 0$ (nonnegative definite) if $k\le u$. Therefore,
the matrix $H(u)\ge 0$ (notice that $H(u)=0$ if $u\le 0$).
We notice that for $|u|\le M$
$$
|\psi(u)|\le 2\max_{|u|\le M}|\varphi(u)|\int_0^u\beta'(k)dk=2\max_{|u|\le M}|\varphi(u)|\beta(u)
$$
(here and in the sequel we denote by $|v|$ the Euclidean norm of a finite-dimensional vector $v$), and analogously
$$
|H(u)|\le 2\max_{|u|\le M}|A(u)|\beta(u).
$$
These estimates imply that for each $\varepsilon>0$
$$
\frac{|\psi(u)|}{\eta(u)+\varepsilon}\le \frac{C_1\beta(u)}{\eta(u)+\varepsilon}, \quad \frac{|H(u)|}{\eta(u)+\varepsilon}\le \frac{C_2\beta(u)}{\eta(u)+\varepsilon},
$$
where $\displaystyle C_1=2\max_{|u|\le M}|\varphi(u)|$, $\displaystyle C_2=2\max_{|u|\le M}|A(u)|$.

Since $\beta(u)=1$ for $u>\delta$ the function $\displaystyle \omega(u)\doteq \frac{\beta(u)}{\eta(u)+\varepsilon}$ decreases on $[\delta,+\infty)$. This implies that
$$
\max \omega(u)=\max_{[0,\delta]} \omega(u)\le\max_{u>0}\frac{(u/\delta)^m}{\delta(u/\delta)^{m+1}/(m+1)+\varepsilon}=\max_{v=u/\delta>0}\frac{m+1}{\delta v+(m+1)\varepsilon v^{-m}}.
$$
By direct computations we find
$$
\min_{v>0}(\delta v+(m+1)\varepsilon v^{-m})=\frac{\delta(m+1)}{m}\left(\frac{m(m+1)\varepsilon}{\delta}\right)^{\frac{1}{m+1}}.
$$
Therefore,
$$
\omega(u)\le\frac{m}{\delta}\left(\frac{\delta}{m(m+1)}\right)^{\frac{1}{m+1}}\varepsilon^{-\frac{1}{m+1}}.
$$
Hence
\begin{equation}\label{p2}
\frac{|\psi(u)|}{\eta(u)+\varepsilon}\le C\varepsilon^{-\frac{1}{m+1}}, \quad \frac{|H(u)|}{\eta(u)+\varepsilon}\le C\varepsilon^{-\frac{1}{m+1}},
\end{equation}
where $$C=\max(C_1,C_2)\frac{m}{\delta}\left(\frac{\delta}{m(m+1)}\right)^{\frac{1}{m+1}}=\const.$$
In view of (\ref{en1}) for every $\varepsilon>0$
$$
(\eta(u)+\varepsilon)_t+\div_x \psi(u)-D^2_x\cdot H(u)\le 0 \ \mbox{ in } \D'(\Pi),
$$
that is, for each test function $f=f(t,x)\in C_0^\infty(\Pi)$, $f\ge 0$
\begin{equation}\label{p3}
\int_\Pi[(\eta(u)+\varepsilon)f_t+\psi(u)\cdot\nabla_x f+H(u)\cdot D^2_xf]dtdx\ge 0.
\end{equation}
We choose a nonstrictly decreasing function $\rho(r)\in C^\infty(\R)$ with the properties $\rho(r)=1$ for $r\le 0$, $\rho(r)=e^{-r}$ for $r\ge 1$, it is concave on $(-\infty,1/2]$ and is convex on $[1/2,+\infty)$ (so that $1/2$ is an inflection point of $\rho(r)$). Such a function satisfies the inequality
\begin{equation}\label{ro}
\rho''(r)\le c|\rho'(r)|=-c\rho'(r)
\end{equation}
for some positive constant $c$. In fact, $\rho''(r)\le 0\le |\rho'(r)|$ for $r<1/2$, $\rho''(r)=-\rho'(r)=e^{-r}$ for $r>1$
while on the segment $[1/2,1]$ we have $-\rho'(r)\ge -\rho'(1)=e^{-1}$ by the convexity of $\rho(r)$, and therefore
$\rho''(r)\le -c\rho'(r)$ where $c=e\max\limits_{1/2\le r\le 1}\rho''(r)\ge 1$. We conclude that (\ref{ro}) holds.

Now we take the test function in the form
$$
f(t,x)=\rho(N(t-t_0)+|x|-R)\chi(t),
$$
where $0<t_0<T$, $R>1$, the constant $N=N(\varepsilon)$ will be indicated later, and a nonnegative function $\chi(t)\in C_0^\infty((0,t_0))$.
Observe that $f=\chi(t)$ in a neighborhood $|x|<R$ of the set where $x=0$, which implies that $f(t,x)\in C_0^\infty(\Pi)$.
Since the function $f$ and all its derivatives are exponentially vanishes as $|x|\to\infty$, we may choose the function $f$ as a test function in (\ref{p3}). Observe that
\begin{align}\label{p4a}
f_t(t,x)=N\rho'(N(t-t_0)+|x|-R)\chi(t)+\rho(N(t-t_0)+|x|-R)\chi'(t), \\
\label{p4b}
\nabla_x f=\rho'(N(t-t_0)+|x|-R)\chi(t)\frac{x}{|x|}, \\
\label{p5}
D^2_x f=\left(\rho''(N(t-t_0)+|x|-R)\frac{x\otimes x}{|x|^2}+\rho'(N(t-t_0)+|x|-R)\frac{|x|^2E-x\otimes x}{|x|^3}\right)\chi(t),
\end{align}
where $E$ denotes the unit matrix.
In view of (\ref{p5}) for all $\xi\in\R^n$,
\begin{align}\label{op}
(D^2_x f)\xi\cdot\xi=\nonumber\\ \chi(t)
\left(\rho''(N(t-t_0)+|x|-R)\frac{(x\cdot\xi)^2}{|x|^2}+\rho'(N(t-t_0)+|x|-R)\frac{|x|^2|\xi|^2-(x\cdot\xi)^2}{|x|^3}\right)
\le \nonumber\\ -c\rho'(N(t-t_0)+|x|-R)\frac{(x\cdot\xi)^2}{|x|^2}\chi(t),
\end{align}
where we use that $\rho'(\cdots)(|x|^2|\xi|^2-(x\cdot\xi)^2)\le 0$. Relation (\ref{op}) implies that the matrix
$$-c\rho'(N(t-t_0)+|x|-R)\chi(t)M(x)-D^2_x f\ge 0$$ (nonnegative definite), where $\displaystyle M(x)=\frac{x\otimes x}{|x|^2}$. Using the known property that the scalar product $A\cdot B\ge 0$ whenever matrices $A,B\ge 0$, we find that
\begin{align}\label{p5a}
H(u)\cdot D^2_x f\le -c\rho'(N(t-t_0)+|x|-R)\chi(t) H(u)\cdot M(x)\le\nonumber\\ -c\rho'(N(t-t_0)+|x|-R)\chi(t)|H(u)|
\end{align}
(notice that $|M(x)|=1$).
It now follows from (\ref{p3}) with the help of (\ref{p4a}), (\ref{p4b}) and (\ref{p5a}) that
\begin{align}\label{p6}
\int_\Pi[(\eta(u)+\varepsilon)\chi'(t)\rho(N(t-t_0)+|x|-R)dtdx+ \nonumber\\
\int_\Pi[N(\eta(u)+\varepsilon)-|\psi(u)|-c|H(u)|]\rho'(N(t-t_0)+|x|-R)\chi(t)dtdx\ge 0.
\end{align}
Taking in (\ref{p6}) $N=C(c+1)\varepsilon^{-\frac{1}{m+1}}$, we find that $N(\eta(u)+\varepsilon)-|\psi(u)|-c|H(u)|\ge 0$
in view of (\ref{p2}). Since $\rho'(r)\le 0$, the last integral in (\ref{p6}) is nonpositive and (\ref{p6}) implies that
\begin{align*}
\int\left(\int_{\R^n} (\eta(u)+\varepsilon)\rho(N(t-t_0)+|x|-R)dx\right)\chi'(t)dt=\\
\int_\Pi(\eta(u)+\varepsilon)\chi'(t)\rho(N(t-t_0)+|x|-R)dtdx\ge 0 \quad \forall \chi(t)\in C_0^\infty((0,t_0)), \ \chi(t)\ge 0,
\end{align*}
This means that
\begin{equation}\label{p6a}
\frac{d}{dt}\int_{\R^n} (\eta(u)+\varepsilon)\rho(N(t-t_0)+|x|-R)dx\le 0 \ \mbox{ in } \D'((0,t_0)).
\end{equation}
Let $E$ be a set of $t>0$ of full measure defined in the proof of Proposition~\ref{pro1}. Then every
$t\in E$ is a Lebesgue point of the function $t\to \int_{\R^n} (\eta(u)+\varepsilon)\rho(N(t-t_0)+|x|-R)dx$. If $t_0\in E$, then it follows from (\ref{p6a}) that for all $t\in E$, $t<t_0$
$$
\int_{\R^n} (\eta(u(t_0,x))+\varepsilon)\rho(|x|-R)dx\le \int_{\R^n} (\eta(u(t,x))+\varepsilon)\rho(N(t-t_0)+|x|-R)dx.
$$
Passing to the limit as $t\to 0$ in the above inequality with the help of initial condition (iv) of Definition~\ref{def1}, we arrive at the relation
\begin{align}\label{p7}
\int_{\R^n}\eta(u(t_0,x))\rho(|x|-R)dx\le \int_{\R^n}(\eta(u_0(x))+\varepsilon)\rho(|x|-Nt_0-R)dx\le \nonumber \\ \int_{\R^n}\eta(u_0(x))dx+\varepsilon \int_{\R^n}\rho(|x|-Nt_0-R)dx.
\end{align}
Observe that
\begin{align}\label{p8}
\int_{\R^n}\rho(|x|-Nt_0-R)dx\le \int_{|x|\le Nt_0+R+1}dx+e^{Nt_0+R}\int_{|x|>Nt_0+R+1} e^{-|x|}dx\le\nonumber\\
c_n(Nt_0+R+1)^n+nc_n e^{Nt_0+R}\int_{Nt_0+R+1}^{+\infty} e^{-r}r^{n-1}dr,
\end{align}
where $c_n$ is the measure of a unit ball in $\R^n$. Since
\begin{align*}
\int_{Nt_0+R+1}^{+\infty} e^{-r}r^{n-1}dr=\int_0^{+\infty}e^{-s-Nt_0-R-1}(s+Nt_0+R+1)^{n-1}ds\le \\ (Nt_0+R+1)^{n-1}e^{-Nt_0-R-1}\int_0^{+\infty}e^{-s}(1+s)^{n-1}ds=a(Nt_0+R+1)^{n-1}e^{-Nt_0-R-1},
\end{align*}
$a=\const$, it follows from (\ref{p8}) that for some constants $a_1,a_2$
$$
\varepsilon\int_{\R^n}\rho(|x|-N(\varepsilon)t_0-R)dx\le a_1\varepsilon(N(\varepsilon)t_0+R+1)^n\le a_2\varepsilon(1+\varepsilon^{-\frac{1}{m+1}})^n\mathop{\to}_{\varepsilon\to 0+}0
$$
(recall that $m+1>n$). Therefore, passing to the limit in (\ref{p7}) as $\varepsilon\to 0+$, we obtain that for all $t_0\in E$
\begin{equation}\label{p9}
\int_{\R^n}\eta(u(t_0,x))\rho(|x|-R)dx\le \int_{\R^n}\eta(u_0(x))dx.
\end{equation}
Now observe that $0\le\eta(u)\le u^+$ and $\eta(u)\to u^+$ as $\delta\to 0$. By Lebesgue dominated convergence theorem it follows from (\ref{p9}) in the limit as $\delta\to 0$ that for a.e. $t=t_0>0$
$$
\int_{\R^n}(u(t,x))^+\rho(|x|-R)dx\le \int_{\R^n}(u_0(x))^+dx<+\infty.
$$
By Fatou lemma this implies in the limit as $R\to\infty$ that
\begin{equation}\label{p10}
\int_{\R^n}(u(t,x))^+dx\le \int_{\R^n}(u_0(x))^+dx,
\end{equation}
that is exactly (\ref{L1}) with $k=0$.
To complete the proof for general $k\in\R$, we notice that $u-k$ is an e.s. of the problem
$$
u_t+\div_x(\varphi(u+k)-a(u+k)\nabla_x u)=0, \quad u(0,x)=u_0(x)-k.
$$
Applying (\ref{p10}) to this e.s., we obtain the desired estimate (\ref{L1}).
\end{proof}

\begin{corollary}\label{cor1a}
If $u=u(t,x)$ is an e.s. of (\ref{1}), (\ref{2}) then $\forall k\in\R$
\begin{equation}\label{L1a}
\int_{\R^n}(k-u(t,x))^+dx\le \int_{\R^n}(k-u_0(x))^+dx
\end{equation}
for a.e. $t>0$.
\end{corollary}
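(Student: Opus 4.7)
The plan is to reduce Corollary~\ref{cor1a} to Proposition~\ref{pro2} via the reflection $u \mapsto -u$. The estimate (\ref{L1a}) for $u$ at level $k$ is the estimate (\ref{L1}) for $-u$ at level $-k$, so it suffices to show that if $u$ is an e.s. of (\ref{1}), (\ref{2}) then $v := -u$ is an e.s. of a companion Cauchy problem obtained by reflecting the coefficients.

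Concretely, I would set $v(t,x) = -u(t,x)$, $v_0(x) = -u_0(x)$, and introduce the reflected data $\tilde\varphi(v) = -\varphi(-v)$, $\tilde a(v) = a(-v)$, which are still continuous, measurable, bounded, and nonnegative definite respectively. I factor $\tilde a = \tilde b^\top \tilde b$ with $\tilde b(v) = b(-v)$, and take the primitives $\tilde A(v) = -A(-v)$ and $\tilde B_j(v) = -B_j(-v)$. Formally, the chain rule $\nabla_x v = -\nabla_x u$ shows that $v$ satisfies
\begin{equation*}
v_t + \div_x\bigl(\tilde\varphi(v) - \tilde a(v)\nabla_x v\bigr) = 0, \qquad v(0,x) = v_0(x).
\end{equation*}

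The substantive step is to verify that $v$ meets conditions (i)--(iv) of Definition~\ref{def1} relative to the reflected data. For (i), $\div_x \tilde B_j(v) = -\div_x B_j(u) \in L^2_{\mathrm{loc}}(\Pi)$. For the chain rule (ii), given $g \in C^1(\R)$, set $\tilde g(u) = g(-u)$; one checks $T_g(\tilde B_j)(v)$ and $T_{\tilde g}(B_j)(u)$ coincide up to an additive constant, so (\ref{cr}) for $v$ reduces to (\ref{cr}) for $u$. For the entropy inequality (iii), fix a convex $\tilde\eta \in C^2(\R)$ and set $\eta(u) = \tilde\eta(-u)$, which is convex as well with $\eta'(u) = -\tilde\eta'(-u)$ and $\eta''(u) = \tilde\eta''(-u)$. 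A direct term-by-term comparison (using that $\tilde\varphi$, $\tilde A$, $\tilde B_j$ are obtained from $\varphi$, $A$, $B_j$ by composition with $-\mathrm{id}$ up to sign and additive constants) shows
\begin{equation*}
T_{\tilde\eta'}(\tilde\varphi)(v) = T_{\eta'}(\varphi)(u) + \const, \qquad T_{\tilde\eta'}(\tilde A)(v) = T_{\eta'}(A)(u) + \const,
\end{equation*}
and $\tilde\eta''(v)(\div_x \tilde B_j(v))^2 = \eta''(u)(\div_x B_j(u))^2$, so the entropy inequality for $v$ with $\tilde\eta$ is identical to the entropy inequality for $u$ with $\eta$. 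Condition (iv) for $v$ is immediate from (iv) for $u$ by linearity.

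With $v$ an e.s. of the reflected problem established, I apply Proposition~\ref{pro2} to $v$ at level $k' = -k$:
\begin{equation*}
\int_{\R^n}(v(t,x) - (-k))^+\,dx \le \int_{\R^n}(v_0(x) - (-k))^+\,dx,
\end{equation*}
which is precisely (\ref{L1a}). The only potential obstacle is the bookkeeping in verifying the entropy conditions for $v$, but this is a routine substitution using $\eta \circ (-\mathrm{id})$; no new ideas are needed beyond those already in the proof of Proposition~\ref{pro2}.
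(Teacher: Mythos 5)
Your proposal is correct and is exactly the paper's argument: the paper likewise observes that $v=-u$ is an entropy solution of the reflected problem $v_t+\div_x(-\varphi(-v)-a(-v)\nabla_x v)=0$, $v(0,x)=-u_0(x)$, and applies (\ref{L1}) with $k$ replaced by $-k$; the verification of Definition~\ref{def1} for $v$, which the paper dismisses as ``easy to verify,'' is what you carry out in detail. (One minor bookkeeping point: in your check of condition (ii), $T_g(\tilde B_j)(v)$ coincides with $-T_{\tilde g}(B_j)(u)$ up to an additive constant, not with $+T_{\tilde g}(B_j)(u)$; since $\div_x \tilde B_j(v)=-\div_x B_j(u)$ as well, the identity (\ref{cr}) for $v$ still reduces to that for $u$, so the conclusion is unaffected.)
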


\begin{proof}
As is easy to verify, the function $v=-u$ is an e.s. of problem
\begin{equation}\label{red}
v_t+\div_x(-\varphi(-v)-a(-v)\nabla_x v)=0, \quad v(0,x)=-u_0(x).
\end{equation}
Applying (\ref{L1}) to this e.s. with $k$ replaced by $-k$, we arrive at (\ref{L1a}).
\end{proof}

\begin{corollary}\label{cor1b} Any e.s. $u=u(t,x)$ of (\ref{1}), (\ref{2}) satisfies the \textbf{maximum/minimum principle}
$$
a=\essinf u_0(x)\le u(t,x)\le b=\esssup u_0(x)  \ \mbox{ for a.e. } (t,x)\in\Pi.
$$
\end{corollary}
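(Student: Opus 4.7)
The proof is essentially a one-line consequence of the two preceding results, so the plan is just to apply them to the extreme choices of the parameter $k$.

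First, I would observe that $a, b$ are finite: since $u\in L^\infty(\Pi)$ and $u(t,\cdot)\to u_0$ in $L^1_{loc}(\R^n)$ by initial condition (iv), a standard lower semicontinuity argument gives $u_0\in L^\infty(\R^n)$ with $\|u_0\|_\infty\le\|u\|_\infty$, so both the essential infimum $a$ and essential supremum $b$ are finite real numbers.

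For the upper bound, I would apply Proposition~\ref{pro2} with $k=b=\esssup u_0(x)$. By the very definition of essential supremum we have $u_0(x)\le b$ for a.e.\ $x\in\R^n$, so $(u_0(x)-b)^+=0$ a.e., and therefore the right-hand side of (\ref{L1}) vanishes. The estimate then reads
$$
\int_{\R^n}(u(t,x)-b)^+\,dx\le 0 \qquad\text{for a.e. } t>0,
$$
which forces $(u(t,x)-b)^+=0$ for a.e.\ $x\in\R^n$ and a.e.\ $t>0$, i.e.\ $u(t,x)\le b$ a.e.\ in $\Pi$.

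Symmetrically, for the lower bound I would apply Corollary~\ref{cor1a} with $k=a=\essinf u_0(x)$. Here $(a-u_0(x))^+=0$ a.e., so (\ref{L1a}) gives $\int_{\R^n}(a-u(t,x))^+\,dx\le 0$ for a.e.\ $t>0$, hence $u(t,x)\ge a$ a.e.\ in $\Pi$. Combining the two inequalities yields the maximum/minimum principle. There is no real obstacle; the whole statement is a direct specialization of Proposition~\ref{pro2} and Corollary~\ref{cor1a} to the sharpest admissible constants $k=b$ and $k=a$.
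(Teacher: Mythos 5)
Your proof is correct and is exactly the paper's argument: the author likewise obtains the maximum/minimum principle directly from (\ref{L1}) and (\ref{L1a}) with $k=b$ and $k=a$, respectively, since the right-hand sides vanish for these choices. The extra remark on finiteness of $a,b$ is harmless but not needed.
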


\begin{proof}
The maximum/minimum principle directly follows from (\ref{L1}) and (\ref{L1a}) with $k=b$ and $k=a$, respectively.
\end{proof}

\begin{lemma}\label{lem1}
Let $u=u(t,x)$ be an e.s. of problem (\ref{1}), (\ref{2}), $\|u\|_\infty\le M$. Then for each nonnegative function $f=f(t,x)\in C_0^\infty(\Pi)$
\begin{equation}
\int_\Pi \left(\sum_{j=1}^n (\div_x B_j(u))^2\right)f(t,x)dtdx \le C(f,M),
\end{equation}
where $C(f,M)$ is a constant depending only on $f$ and $M$.
\end{lemma}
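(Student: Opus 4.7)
The natural entropy to use is the quadratic one: set $\eta(u)=u^2/2$, which is convex and $C^2$, with $\eta'(u)=u$ and $\eta''(u)\equiv 1$. My first step is to substitute this $\eta$ into the integral entropy inequality (\ref{entrI}) with the given nonnegative test function $f\in C_0^\infty(\Pi)$. Because $\eta''\equiv 1$, the dissipation term on the right comes out as exactly $\int_\Pi f\sum_{j=1}^n(\div_x B_j(u))^2\,dtdx$, so after rearranging (\ref{entrI}) one gets
\begin{equation*}
\int_\Pi f\sum_{j=1}^n(\div_x B_j(u))^2\,dtdx\le \int_\Pi\bigl[\tfrac{1}{2}u^2 f_t+T_u(\varphi)(u)\cdot\nabla_x f+T_u(A)(u)\cdot D^2_x f\bigr]\,dtdx.
\end{equation*}

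The second step is purely an $L^\infty$ bound on the right-hand side. Since $\|u\|_\infty\le M$, the quantities $T_u(\varphi)(u)$ and $T_u(A)(u)$ are determined by values of $\varphi$ and $A$ on the compact interval $[-M,M]$; from their defining formula (\ref{efl}) one immediately gets $|T_u(\varphi)(u)|\le 2M\max_{|s|\le M}|\varphi(s)|$ and $|T_u(A)(u)|\le 2M\max_{|s|\le M}|A(s)|$ for $|u|\le M$. Combined with $|u|^2/2\le M^2/2$, the right-hand side is thus bounded by
\begin{equation*}
\tfrac{M^2}{2}\|f_t\|_{L^1(\Pi)}+C_1(M)\|\nabla_x f\|_{L^1(\Pi)}+C_2(M)\|D^2_x f\|_{L^1(\Pi)},
\end{equation*}
which is a constant $C(f,M)$ depending only on $f$ and $M$. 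Since the left-hand side is nonnegative (each summand is a square), this yields the desired estimate.

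There is essentially no obstacle here; the only thing to verify in passing is that $T_u(\varphi)(u)$ and $T_u(A)(u)$ are well-defined continuous functions of $u$ bounded on $[-M,M]$, which follows directly from the definition (\ref{efl}) applied to $g(u)=\eta'(u)=u$ (a $C^1$, hence $BV_{loc}$, function) and from the continuity of $\varphi$ together with the boundedness of $a$ (which makes $A$ Lipschitz and hence continuous). No further machinery is needed: the lemma is a direct readout of the entropy inequality with the quadratic entropy.
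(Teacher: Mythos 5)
Your proposal is correct and follows essentially the same route as the paper: the paper's proof also takes $\eta(u)=u^2/2$ in the entropy inequality, isolates the dissipation term $\int_\Pi f\sum_j(\div_x B_j(u))^2\,dtdx$ using $\eta''\equiv 1$, and bounds the remaining terms by $\max_{|u|\le M}(\eta(u)+|T_{\eta'}(\varphi)(u)|+|T_{\eta'}(A)(u)|)$ times an integral of the derivatives of $f$. No discrepancies to report.
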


\begin{proof}
we choose $\eta(u)=u^2/2$, $\psi(u)=T_{\eta'}(\varphi)(u)$, $H(u)=T_{\eta'}(A)(u)$.
By entropy relation (\ref{entr})
\begin{align*}
\int_\Pi \left(\sum_{j=1}^n (\div_x B_j(u))^2\right)f dtdx\le
\int_\Pi \bigl[\eta(u)f_t+\psi(u)\cdot\nabla_x f+H(u)\cdot D^2_x f\bigr]dtdx\le \\
C(f,M)\doteq \max_{|u|\le M}(\eta(u)+|\psi(u)|+|H(u)|)\int_\Pi\max(|f_t|,|\nabla_x f|,|D^2_x f|)dtdx,
\end{align*}
as was to be proved.
\end{proof}

\section{Main results}
As was established in \cite{ChPer1} (for the isotropic case see earlier paper \cite{Car}) by application of a variant of Kruzhkov's doubling variables method, the following Kato inequality holds for a pair of entropy solutions $u_1$, $u_2$.
\begin{equation}\label{Kato}
(|u_1-u_2|)_t+\div_x [\sgn(u_1-u_2)(\varphi(u_1)-\varphi(u_2))]-  D^2_x\cdot [\sgn(u_1-u_2)(A(u_1)-A(u_2))]\le 0 \ \mbox{ in } \D'(\Pi).
\end{equation}
Relation (\ref{Kato}) is the basis for the proof of comparison principle and uniqueness of e.s. But in the case under consideration when the flux functions are merely continuous while the diffusion matrix is degenerate these properties may be generally violated (see examples in \cite{KrPa1,KrPa2} for the case of conservation laws)
and additional conditions are necessary. Some of them can be found in \cite{AndIg,AndMal,MaT} in the isotropic case. The following result is the straightforward extension of \cite[Lemma~1]{ABK}.

\begin{lemma}\label{lem3}
Let $u_1=u_1(t,x)$, $u_2=u_2(t,x)$ be e.s. of problem (\ref{1}), (\ref{2}) with initial functions $u_{01}$, $u_{02}$, respectively. Suppose that for every $T>0$ the set
$$
A_T\doteq\{ \ (t,x)\in (0,T)\times\R^n \ | \ u_1(t,x)>u_2(t,x) \ \}
$$
has finite Lebesgue measure. Then for a.e. $t>0$
$$
\int_{\R^n}(u_1(t,x)-u_2(t,x))^+dx\le\int_{\R^n}(u_{01}(x)-u_{02}(x))^+dx.
$$
In particular, if $u_{01}\le u_{02}$ then $u_1\le u_2$ a.e. on $\Pi$ (comparison principle).
\end{lemma}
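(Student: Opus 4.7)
The starting point is the Kato inequality (\ref{Kato}). Taking $\eta(u)=\pm u$ in the entropy inequality for each $u_i$ shows that the difference satisfies the weak equation
$$(u_1-u_2)_t+\div_x(\varphi(u_1)-\varphi(u_2))-D^2_x\cdot(A(u_1)-A(u_2))=0 \quad \text{in } \D'(\Pi).$$
Adding this to (\ref{Kato}) and dividing by two yields the half-Kato inequality
\begin{equation*}
((u_1-u_2)^+)_t+\div_x[\chi_{\{u_1>u_2\}}(\varphi(u_1)-\varphi(u_2))]-D^2_x\cdot[\chi_{\{u_1>u_2\}}(A(u_1)-A(u_2))]\le 0
\end{equation*}
in $\D'(\Pi)$. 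By the same time-regularization and Lebesgue-point argument used in the proof of Proposition~\ref{pro1} (exploiting initial condition (iv) for each $u_i$ together with continuity of the map $u\mapsto u^+$), I would promote this to the integral form
\begin{align*}
\int_\Pi\bigl[(u_1-u_2)^+f_t+\chi_{\{u_1>u_2\}}(\varphi(u_1)-\varphi(u_2))\cdot\nabla_x f \\
+\chi_{\{u_1>u_2\}}(A(u_1)-A(u_2))\cdot D^2_x f\bigr]dtdx+\int_{\R^n}(u_{01}(x)-u_{02}(x))^+f(0,x)\,dx\ge 0
\end{align*}
valid for every nonnegative $f\in C_0^\infty(\bar\Pi)$.

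Next, I would test this against $f(t,x)=\chi(t)\rho_R(x)$, where $\chi\in C_0^\infty([0,+\infty))$ is nonnegative and $\rho_R(x)=\rho(|x|-R)$ for a fixed smooth $\rho:\R\to[0,1]$ with $\rho\equiv 1$ on $(-\infty,0]$ and $\rho\equiv 0$ on $[1,+\infty)$. Then $\rho_R\to 1$ pointwise as $R\to+\infty$, while $\nabla\rho_R$ and $D^2\rho_R$ are bounded uniformly in $R$ and supported in the shell $\{R\le|x|\le R+1\}$.

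The crux of the argument is that the two terms involving $\nabla_x f$ and $D^2_x f$ vanish as $R\to+\infty$. By the maximum/minimum principle (Corollary~\ref{cor1b}) the quantities $\varphi(u_1)-\varphi(u_2)$ and $A(u_1)-A(u_2)$ are uniformly bounded, and because each integrand carries the factor $\chi_{\{u_1>u_2\}}$, they are supported in $A_T\cap\{R\le|x|\le R+1\}$, where $T>\sup\supp\chi$. Hence both terms are bounded by a constant times $\meas(A_T\cap\{|x|\ge R\})$, which tends to zero by dominated convergence, using $|A_T|<+\infty$. The remaining terms pass to the limit by dominated/monotone convergence (the desired estimate is trivial unless $(u_{01}-u_{02})^+\in L^1(\R^n)$, and $(u_1-u_2)^+$ is bounded and supported in $A_T$), yielding
$$
\int_\Pi(u_1-u_2)^+\chi'(t)\,dtdx+\chi(0)\int_{\R^n}(u_{01}(x)-u_{02}(x))^+dx\ge 0
$$
for every nonnegative $\chi\in C_0^\infty([0,+\infty))$. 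Choosing $\chi$ as a smooth approximation to the indicator of $[0,t_0]$, where $t_0$ is a Lebesgue point of $t\mapsto\int_{\R^n}(u_1-u_2)^+(t,x)dx$, produces the $L^1$-contraction estimate; the comparison principle follows immediately in the case $u_{01}\le u_{02}$, since both sides of the inequality vanish.

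The main technical obstacle is upgrading the distributional half-Kato inequality to its integral form carrying the initial-data contribution; because $(u_1-u_2)^+$ is not a $C^2$ convex entropy, one must approximate in $\eta$ and invoke the Lebesgue-point structure as in Proposition~\ref{pro1}. Once this step is in place, the finite-measure hypothesis on $A_T$ does double duty, simultaneously killing the far-field flux and diffusion contributions produced by the cutoff derivatives $\nabla\rho_R$ and $D^2\rho_R$.
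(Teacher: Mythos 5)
Your proposal is correct and follows essentially the same route as the paper: the half-Kato inequality for $(u_1-u_2)^+$, a time regularization exploiting initial condition (iv), and a spatial cutoff whose derivative terms are annihilated by the finite-measure hypothesis on $A_T$. The only difference is cosmetic: the paper uses the dilated cutoff $p(x/l)$, so the gradient and Hessian terms carry factors $l^{-1}$, $l^{-2}$ against the fixed finite quantity $\meas A_T$, whereas you use the translated annular cutoff $\rho(|x|-R)$ and control the same terms by the vanishing tail $\meas(A_T\cap\{|x|\ge R\})$; both variants rest on exactly the same use of $\meas A_T<+\infty$.
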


\begin{proof}
Putting (\ref{Kato}) together with the identity
$$
(u_1-u_2)_t+\div_x (\varphi(u_1)-\varphi(u_2))-D^2_x\cdot (A(u_1)-A(u_2))=0  \ \mbox{ in } \D'(\Pi),
$$
we obtain the relation
\begin{equation}\label{Kato+}
((u_1-u_2)^+)_t+\div_x[\theta(u_1-u_2)(\varphi(u_1)-\varphi(u_2))]-D^2_x\cdot[\theta(u_1-u_2)(A(u_1)-A(u_2))]\le 0  \ \mbox{ in } \D'(\Pi),
\end{equation}
where $\theta(u)=\sgn^+(u)$ is the Heaviside function.

We take $0<t_0<t_1$, and set $f=f(t,x)=(\theta_r(t-t_0)-\theta_r(t-t_1))p(x/l)$, where $r,l\in\N$, the nonnegative function $p(y)\in C_0^\infty(\R^n)$ is such that $0\le p(y)\le p(0)=1$, and the sequence $\theta_r(s)=\int_{-\infty}^s\omega_r(\sigma)d\sigma$ of approximations of the Heaviside function was constructed in Proposition~\ref{pro1} above. Applying (\ref{Kato+}) to the test function $f$, we obtain, after simple transforms
\begin{align}\label{11}
\int_\Pi (u_1(t,x)-u_2(t,x))^+\omega_r(t-t_1)p(x/l)dtdx\le \int_\Pi (u_1(t,x)-u_2(t,x))^+\omega_r(t-t_0)p(x/l)dtdx+ \nonumber\\
\frac{1}{l}\int_\Pi \theta(u_1-u_2)(\varphi(u_1)-\varphi(u_2))\cdot\nabla_y p(x/l)(\theta_r(t-t_0)-\theta_r(t-t_1))dtdx+ \nonumber\\
\frac{1}{l^2}\int_\Pi \theta(u_1-u_2)(A(u_1)-A(u_2))\cdot D^2_y p(x/l)(\theta_r(t-t_0)-\theta_r(t-t_1))dtdx.
\end{align}
Suppose that $t_0,t_1\in E$, where $E$ is a set of full measure of values $t$ such that $(t,x)$ is a Lebesgue point of the function $(u_1(t,x)-u_2(t,x))^+$ for a.e. $x\in\R^n$. Then $t_0,t_1$ are Lebesgue points of the functions $\displaystyle t\to \int_{\R^n} (u_1(t,x)-u_2(t,x))^+p(x/l)dx$, $l\in\N$, and it follows from (\ref{11}) in the limit as $r\to\infty$ that
\begin{align}\label{12}
\int_{\R^n} (u_1(t_1,x)-u_2(t_1,x))^+p(x/l)dx\le \int_{\R^n} (u_1(t_0,x)-u_2(t_0,x))^+p(x/l)dx+ \nonumber\\
\frac{1}{l}\int_{(t_0,t_1)\times\R^n} \theta(u_1-u_2)(\varphi(u_1)-\varphi(u_2))\cdot\nabla_y p(x/l)dtdx+ \nonumber\\
\frac{1}{l^2}\int_{(t_0,t_1)\times\R^n} \theta(u_1-u_2)(A(u_1)-A(u_2))\cdot D^2_y p(x/l)dtdx\le
\int_{\R^n} (u_1(t_0,x)-u_2(t_0,x))^+dx+ \nonumber\\ \left(\frac{1}{l}\|\varphi(u_1)-\varphi(u_2)\|_\infty\|\nabla_y p\|_\infty+\frac{1}{l^2}\|A(u_1)-A(u_2)\|_\infty\|D^2_y p\|_\infty\right)\int_{(0,t_1)\times\R^n} \theta(u_1-u_2)dtdx.
\end{align}
Observe that by our assumption $\int_{(0,t_1)\times\R^n} \theta(u_1-u_2)dtdx<+\infty$. Passing in (\ref{12}) to the limit as $E\ni t_0\to 0+$, we obtain that for all $t=t_1\in E$
\begin{align}\label{13}
\int_{\R^n} (u_1(t,x)-u_2(t,x))^+p(x/l)dx\le \int_{\R^n} (u_{01}(x)-u_{02}(x))^+p(x/l)dx+ \nonumber\\
\left(\frac{1}{l}\|\varphi(u_1)-\varphi(u_2)\|_\infty\|\nabla_y p\|_\infty+\frac{1}{l^2}\|A(u_1)-A(u_2)\|_\infty\|D^2_y p\|_\infty\right)\int_{(0,t)\times\R^n} \theta(u_1-u_2)dtdx,
\end{align}
where we use that
$$
|(u_1(t_0,x)-u_2(t_0,x))^+-(u_{01}(x)-u_{02}(x))^+|\le |u_1(t_0,x)-u_{01}(x)|+|u_2(t_0,x)-u_{02}(x)|\mathop{\to}_{E\ni t_0\to 0+} 0,
$$
by initial condition (iv).
Passing in (\ref{13}) to the limit as $l\to\infty$, with the help of Fatou's lemma, we arrive at
$$
\int_{\R^n}(u_1(t,x)-u_2(t,x))^+dx\le\int_{\R^n}(u_{01}(x)-u_{02}(x))^+dx.
$$
This completes the proof.
\end{proof}

Now, we are ready to establish existence of the largest and the smallest e.s. of our problem.

\subsection{Proof of Theorem~\ref{th2}}
We choose a strictly decreasing sequence $b_r$, $r\in\N$ such that $b_r>b=\esssup u_0(x)$ for all $r\in\N$,
and define a sequence of initial functions
$$
u_{0r}(x)=\left\{\begin{array}{lcr} u_0(x) & , & |x|\le r, \\ b_r & , & |x|>r. \end{array}\right.
$$
Let $u_r=u_r(t,x)$ be an e.s. of (\ref{1}), (\ref{2}) with initial data $u_{0r}$. Existence of this e.s. is known, for sufficiently regular nonlinearities it was established in \cite{ChPer1}, the general case can be treated by approximation with the help of known a priori estimates.
Observe that $\forall r\in\N$
$$
u_0(x)\le u_{0r+1}(x)\le u_{0r}(x)\le b_r \ \mbox{ a.e. on } \R^n, \mbox{ and }
\lim_{r\to\infty} u_{0r}(x)=u_0(x).
$$
Denote $d_r=b_r-b_{r+1}>0$. By the maximum principle $u_r\le b_r$ for all $r\in\N$. Therefore,
$$
\{(t,x) | u_{r+1}(t,x)>u_r(t,x)\}\subset\{(t,x) | b_{r+1}>u_r(t,x)\}=\{(t,x) | b_r-u_r(t,x)>d_r\}.
$$
By Chebyshev's inequality and Proposition~\ref{pro2} for each $T>0$
\begin{align*}
\meas\{ \ (t,x)\in (0,T)\times\R^n \ | \ u_{r+1}(t,x)>u_r(t,x) \ \}\le \\ \meas\{ \ (t,x)\in (0,T)\times\R^n \ | \  b_r-u_r(t,x)>d_r \ \}\le \\
\frac{1}{d_r}\int_{(0,T)\times\R^n}(b_r-u_r)^+dtdx\le  \frac{T}{d_r}\int_{\R^n}(b_r-u_{0r})^+dx= \frac{T}{d_r}\int_{|x|<r}(b_r-u_0)dx<+\infty.
\end{align*}
We see that the assumption of Lemma~\ref{lem3} regarded to the e.s. $u_{r+1}$ and $u_r$ is satisfied and by this lemma $u_{r+1}\le u_r$ a.e. on $\Pi$. Since
$u_{0r}\ge u_0\ge a\doteq\essinf u_0(x)$ then $u_r\ge a$, by the minimum principle. Hence, the sequence
$$
u_r(t,x)\mathop{\to}_{r\to\infty} u_+(t,x)\doteq\inf_{r>0} u_r(t,x)
$$
a.e. on $\Pi$, as well as in $L^1_{loc}(\Pi)$.
Further, $\|u_r\|_\infty\le M=\const$ and by Lemma~\ref{lem1} the sequences $\div_x B_j(u_r)$ are bounded in
$L^2_{loc}(\Pi)$ for all $j=1,\ldots,n$. Passing to a subsequence if necessary, we may suppose that
$\div_x B_j(u_r)\rightharpoonup p_j$ as $r\to\infty$ weakly in $L^2_{loc}(\Pi)$, $j=1,\ldots,n$. It follows from the relation
$$
\int_\Pi B_j(u_r)\cdot \nabla_x f dtdx=-\int_\Pi f\div_x B_j(u_r)dtdx, \quad f=f(t,x)\in C_0^\infty(\Pi)
$$
in the limit as $r\to\infty$ that
$$
\int_\Pi B_j(u_+)\cdot\nabla_x f dtdx=-\int_\Pi fp_j dtdx, \quad \forall f=f(t,x)\in C_0^\infty(\Pi),
$$
that is, $\div_x B_j(u_+)=p_j\in L^2_{loc}(\Pi,\R^n)$ in $\D'(\Pi)$ for all $j=1,\ldots,n$. We conclude that $u_+$ satisfies assumption (i) of Definition~\ref{def1}. Further, in view of (ii) applied to e.s. $u_r$ we have
for every $g(u)\in C^1(\R)$, $j=1,\ldots,n$, $f\in C_0^\infty(\Pi)$
$$
\int_\Pi T_g(B_j)(u_r)\cdot\nabla_x f dtdx=-\int_\Pi g(u_r)\div_x B_j(u_r)fdtdx.
$$
In the limit as $r\to\infty$ this implies that $\forall f\in C_0^\infty(\Pi)$
$$
\int_\Pi T_g(B_j)(u_+)\cdot\nabla_x f dtdx=-\int_\Pi g(u_+)p_jfdtdx,
$$
that is,
$$
\div_x T_g(B_j)(u_+)=g(u_+)p_j=g(u_+)\div_x B_j(u_+) \ \mbox{ in } \D'(\Pi).
$$
We find that $u_+$ satisfies the chain rule (ii).

By Proposition~\ref{pro1} e.s. $u_r$ satisfies integral inequality (\ref{eint}): for any convex entropy $\eta\in C^2(\R)$ and each nonnegative test function $f=f(t,x)\in C_0^\infty(\bar\Pi)$
\begin{align}\label{eint2}
\int_\Pi \bigl[\eta(u_r)f_t+T_{\eta'}(\varphi)(u_r)\cdot\nabla_x f+T_{\eta'}(A)(u_r)\cdot D^2_x f-\nonumber\\ f\eta''(u_r)\sum_{j=1}^n (\div_x B_j(u_r))^2\bigr]dtdx+ \int_{\R^n} \eta(u_0(x))f(0,x)dx\ge 0.
\end{align}
Observe that for each $j=1,\ldots,n$
$$
v_{jr}\doteq\sqrt{\eta''(u_r)}\div_x B_j(u_r)\rightharpoonup v_j\doteq\sqrt{\eta''(u_+)}\div_x B_j(u_+)
$$
as $r\to\infty$ weakly in $L^2_{loc}(\Pi)$. By the known property of weak convergence
$$
\int_\Pi f\sum_{j=1}^n v_j^2dtdx\le\liminf_{r\to\infty}\int_\Pi f\sum_{j=1}^n v_{jr}^2dtdx,
$$
which reduces to the inequality
\begin{equation}\label{w1}
\int_\Pi f\eta''(u_+)\sum_{j=1}^n (\div_x B_j(u_+))^2dtdx\le\liminf_{r\to\infty}\int_\Pi f\eta''(u_r)\sum_{j=1}^n (\div_x B_j(u_r))^2dtdx.
\end{equation}
Passing in (\ref{eint2}) to the limit as $r\to\infty$ and taking into account (\ref{w1}), we obtain that
for every $f=f(t,x)\in C_0^\infty(\bar\Pi)$, $f\ge 0$,
\begin{align*}
\int_\Pi \bigl[\eta(u_+)f_t+T_{\eta'}(\varphi)(u_+)\cdot\nabla_x f+T_{\eta'}(A)(u_+)\cdot D^2_x f-\nonumber\\ f\eta''(u_+)\sum_{j=1}^n (\div_x B_j(u_+))^2\bigr]dtdx+ \int_{\R^n} \eta(u_0(x))f(0,x)dx\ge 0.
\end{align*}
Thus, the function $u_+$ also satisfies (\ref{eint}). By Proposition~\ref{pro1}.
$u_+$ is an e.s. of (\ref{1}), (\ref{2}).

Let us demonstrate that $u_+$ is the largest e.s. of this problem.
For that, we choose an arbitrary e.s. $u=u(t,x)$ of (\ref{1}), (\ref{2}). By the maximum principle,
$u\le b$. Therefore, in the set $\Pi_T=(0,T)\times\R^n$ $\{u>u_r\}\subset\{b>u_r\}= \{b_r-u_r>b_r-b\}$ and consequently
\begin{align*}
\meas\{u>u_r\}\le\frac{1}{b_r-b}\int_{\Pi_T}(b_r-u_r)^+dx\le \frac{T}{b_r-b}\int_{|x|<r}(b_r-u_0)dx<+\infty,
\end{align*}
where we use again Chebyshev's inequality and Proposition~\ref{pro2}.
Hence, the requirement of Lemma~\ref{lem3}, applied to the e.s. $u$ and $u_r$, is satisfied and, by the comparison principle,
the inequality $u_0\le u_{0r}$ implies that $u\le u_r$ a.e. on $\Pi$. In the limit as $r\to\infty$ we conclude that $u\le u_+$ a.e. on $\Pi$. Hence, $u_+$ is the unique largest e.s.

Let $v_+=v_+(t,x)$ be the largest e.s. of (\ref{red}). It is clear that the function
$u_-(t,x)=-v_+(t,x)$ is the smallest e.s. of the original problem. The inequality $u_-\le u_+$ is evident.
The proof of Theorem~\ref{th2} is complete.

\subsection{Proof of Theorem~\ref{th2contr}}
Let $u_{1+}$, $u_{2+}$ be the largest e.s. of (\ref{1}), (\ref{2}) with initial data $u_{10}$, $u_{20}$. We choose a strictly decreasing sequence $b_r$, $r\in\N$, such that $b_r>\max(\|u_{10}\|_\infty,\|u_{20}\|_\infty)$, and consider the sequences
$$
u_{1r}^0(x)=\left\{\begin{array}{lcr} u_{10}(x) & , & |x|\le r, \\ b_r & , & |x|>r \end{array}\right., \quad
u_{2r}^0(x)=\left\{\begin{array}{lcr} u_{20}(x) & , & |x|\le r, \\ b_r+1 & , & |x|>r
\end{array}\right..
$$
Then the corresponding sequences $u_{1r},u_{2r}$ of e.s. strongly converges as $r\to\infty$ to the largest e.s. $u_{1+}$, $u_{2+}$, respectively. By the maximum principle $u_{1r}\le b_r$. Therefore, for each $T>0$
\begin{align*}
\{(t,x)\in\Pi_T | u_{1r}(t,x)>u_{2r}(t,x)\}\subset \\ \{(t,x)\in\Pi_T | b_r>u_{2r}(t,x)\}=\{(t,x)\in\Pi_T | b_r+1-u_{2r}(t,x)>1\},
\end{align*}
and by Chebyshev's inequality and Proposition~\ref{pro2}
\begin{align*}
\meas\{(t,x)\in\Pi_T | u_{1r}(t,x)>u_{2r}(t,x)\}\le\int_{\Pi_T} (b_r+1-u_{2r}(t,x))dtdx\le \\ T\int_{|x|<r} (b_r+1-u_{20}(x))dx<\infty.
\end{align*}
By Lemma~\ref{lem3} again we conclude that for a.e. $t>0$
\begin{align*}
\int_{\R^n}(u_{1r}(t,x)-u_{2r}(t,x))^+dx\le\int_{\R^n}(u_{1r}^0(x)-u_{2r}^0(x))^+dx=\\ \int_{|x|<r}(u_{10}(x)-u_{20}(x))^+dx\le
\int_{\R^n}(u_{10}(x)-u_{20}(x))^+dx.
\end{align*}
Passing to the limit as $r\to\infty$ with the help of Fatou's lemma, we derive the desired result
$$
\int_{\R^n}(u_{1+}(t,x)-u_{2+}(t,x))^+dx\le \int_{\R^n}(u_{10}(x)-u_{20}(x))^+dx.
$$
Concerning the smallest e.s., we observe that $u_{1-}=-v_{1+}$, $u_{2-}=-v_{2+}$, where $v_{1+}$, $v_{2+}$ are the largest e.s. of problem (\ref{red}) with initial data $-u_{10}(x)$, $-u_{20}(x)$, respectively. As was already proved,
$$
\int_{\R^n}(v_{2+}(t,x)-v_{1+}(t,x))^+dx\le \int_{\R^n}(u_{10}(x)-u_{20}(x))^+dx,
$$
which reduces to the desired relation
$$
\int_{\R^n}(u_{1-}(t,x)-u_{2-}(t,x))^+dx\le \int_{\R^n}(u_{10}(x)-u_{20}(x))^+dx.
$$

\subsection{The case of periodic initial data}
Now we assume that the initial function $u_0(x)$ is periodic with a lattice of periods $L\subset\R^n$. Hence, $u_0(x+e)=u_0(x)$ a.e. on $\R^n$ for each $e\in L$.

\begin{theorem}\label{th3}
The largest e.s. $u_+$ and the smallest e.s. $u_-$ of the problem (\ref{1}), (\ref{2}) are space-periodic and coincide: $u_+=u_-$.
\end{theorem}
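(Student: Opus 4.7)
The plan has two steps: first establish periodicity of $u_+$ and $u_-$, then prove $u_+=u_-$.

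\textbf{Periodicity.} Fix $e\in L$. The shift $v(t,x)=u_+(t,x+e)$ is again an e.s.\ of (\ref{1}), since the coefficients depend only on $u$ and so conditions (i)--(iv) of Definition~\ref{def1} are translation-invariant; its initial datum is $u_0(x+e)=u_0(x)$. The uniqueness part of Theorem~\ref{th2} forces $v=u_+$, so $u_+$ is $L$-periodic. The same argument applied via $u_-(t,x)=-v_+(t,x)$ to the reduced problem (\ref{red}) yields the $L$-periodicity of $u_-$.

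\textbf{Equality.} Put $w=u_+-u_-\ge 0$. Taking $\eta(u)=\pm u$ in (\ref{entr}) shows that $u_\pm$ are weak solutions of (\ref{1c}), so subtracting yields
\begin{equation}\label{wperiodic}
w_t+\div_x F-D^2_x\cdot G=0\ \text{in}\ \D'(\Pi),
\end{equation}
with $F=\varphi(u_+)-\varphi(u_-)$ and $G=A(u_+)-A(u_-)$ bounded and $L$-periodic in $x$. Pair (\ref{wperiodic}) with $f(t,x)=\phi(t)\psi(x/l)$, $\phi\in C_0^\infty((0,\infty))$, $\psi\in C_0^\infty(\R^n)$. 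After dividing by $l^n$ and substituting $y=x/l$ in the spatial integrals, this becomes
\begin{align*}
0 &= \int_0^\infty\phi'(t)\int_{\R^n}w(t,ly)\psi(y)\,dy\,dt \\
&\quad+\frac{1}{l}\int_0^\infty\phi(t)\int_{\R^n}F(t,ly)\cdot\nabla\psi(y)\,dy\,dt \\
&\quad+\frac{1}{l^2}\int_0^\infty\phi(t)\int_{\R^n}G(t,ly)\cdot D^2\psi(y)\,dy\,dt.
\end{align*}
The classical mean-value property for periodic functions asserts $\int h(ly)\psi(y)\,dy\to \bar h\int\psi\,dy$ as $l\to\infty$ for every bounded $L$-periodic $h$, where $\bar h=|P|^{-1}\int_P h\,dx$ and $P$ is a fundamental domain of $L$. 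Applying this pointwise in $t$ to $h=w(t,\cdot),F(t,\cdot),G(t,\cdot)$ and using dominated convergence for the outer $t$-integral (the inner integrals are uniformly bounded in $t,l$ by $\|w\|_\infty\|\psi\|_1$, etc., and $\phi,\phi'$ have compact support), the last two terms vanish in the limit $l\to\infty$. Choosing $\psi$ with $\int\psi\,dy\neq 0$, we conclude
$$\int_0^\infty\phi'(t)\bar w(t)\,dt=0\quad\forall\phi\in C_0^\infty((0,\infty)),$$
where $\bar w(t)=|P|^{-1}\int_P w(t,x)\,dx$. Hence $\bar w$ is distributionally constant on $(0,\infty)$.

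To identify the constant, the initial condition (iv) gives $w(t,\cdot)\to 0$ in $L^1_{loc}(\R^n)$ as $t\to 0+$, which for periodic $w$ forces $\bar w(t)\to 0$; so $\bar w\equiv 0$. Since $w(t,\cdot)\ge 0$ is $L$-periodic with zero mean for a.e.\ $t$, it must vanish a.e., giving $u_+=u_-$ on $\Pi$. The main technical point is the $l\to\infty$ limit: the Riemann--Lebesgue-type identity $\int h(ly)\psi(y)\,dy\to \bar h\int\psi\,dy$ for bounded $L$-periodic $h$ is elementary (partition $\supp\psi$ into $l$-scale cells aligned to the lattice), and the uniform $L^\infty$ bounds on $w,F,G$ supply enough regularity to commute the limit with the $t$-integration. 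The remainder of the argument is routine.
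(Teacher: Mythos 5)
Your proposal is correct and follows essentially the same route as the paper: periodicity via uniqueness of the largest/smallest solution under lattice shifts, then testing the difference of the two weak solutions against $\phi(t)\psi(x/l)$, rescaling by $l^{-n}$ so the flux and diffusion terms vanish, and identifying the constant spatial mean of $u_+-u_-$ as zero via the initial condition, whence nonnegativity forces $u_+=u_-$. No gaps.
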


\begin{proof}
Let $e\in L$. In view of periodicity of the initial function it is obvious that $u(t,x+e)$ is an e.s. of (\ref{1}), (\ref{2}) if and only if $u(t,x)$ is an e.s. of the same problem. Therefore, $u_+(t,x+e)$ is the largest e.s.
of (\ref{1}), (\ref{2}) together with $u_+$. By the uniqueness $u_+(t,x+e)=u_+(t,x)$ a.e. on $\Pi$ for all $e\in L$, that is $u_+$ is a space periodic function. In the same way we prove space periodicity of the smallest e.s. $u_-$.
Since $u_\pm$ are weak solutions of (\ref{1}), we find
\begin{equation}\label{14}
(u_+-u_-)_t+\div_x(\varphi(u_+)-\varphi(u_-))-D^2_x\cdot(A(u_+)-A(u_-))=0 \ \mbox{ in } \D'(\Pi).
\end{equation}
Let $\alpha(t)\in C_0^1(\R_+)$, $\beta(y)\in C_0^2(\R^n)$, $\alpha(t),\beta(y)\ge 0$, $\displaystyle\int_{\R^n}\beta(y)dy=1$. Applying (\ref{14}) to the test function $\alpha(t)\beta(x/k)$, with $k\in\N$, we arrive at the relation
\begin{align*}
\int_\Pi(u_+-u_-)\alpha'(t)\beta(x/k)dtdx+k^{-1}\int_\Pi(\varphi(u_+)-\varphi(u_-))\cdot\nabla_y\beta(x/k)\alpha(t)dtdx+\\
k^{-2}\int_\Pi(A(u_+)-A(u_-))\cdot D^2_y\beta(x/k)\alpha(t)dtdx=0.
\end{align*}
Multiplying this equality by $k^{-n}$ and passing to the limit as $k\to\infty$, we obtain
\begin{equation}\label{15}
\int_{\R_+\times\T^n}(u_+(t,x)-u_-(t,x))\alpha'(t)dtdx=0,
\end{equation}
where $\T^n=\R^n/L$ is a torus (which can be identified with a fundamental parallelepiped), equipped with the normalized Lebesgue measure $dx$. We use here the known property
$$
\lim_{k\to\infty}k^{-n} \int_\Pi\mu(t,x)\alpha(t)\beta(x/k)dtdx=\int_{\R_+\times\T^n}\alpha(t)\mu(t,x)dtdx
$$
for an arbitrary $x$-periodic function $\mu(t,x)\in L^1_{loc}(\Pi)$. Identity (\ref{15}) means that
$$
\frac{d}{dt}\int_{\T^n}(u_+(t,x)-u_-(t,x))dx=0 \ \mbox{ in } \D'(\R_+).
$$
This implies that for a.e. $t,t_0$, $t>t_0$
\begin{equation}\label{16}
\int_{\T^n}(u_+(t,x)-u_-(t,x))dx=\int_{\T^n}(u_+(t_0,x)-u_-(t_0,x))dx.
\end{equation}
Taking into account the initial relation (iv), we find that
$$
\int_{\T^n}(u_+(t_0,x)-u_-(t_0,x))dx\le\int_{\T^n} |u_+(t_0,x)-u_0(x)|dx+\int_{\T^n} |u_-(t_0,x)-u_0(x)|dx\to 0
$$
as $t_0\to 0$ running over a set of full measure. Therefore (\ref{16}) implies in the limit as $t_0\to 0$ that
$$\int_{\T^n}(u_+(t,x)-u_-(t,x))dx=0$$ for a.e. $t>0$. Since $u_+\ge u_-$, we conclude that $u_+=u_-$ a.e. on $\Pi$.
\end{proof}
Since any e.s. of (\ref{1}), (\ref{2}) is situated between $u_-$ and $u_+$, we deduce the following

\begin{corollary}\label{cor4}
An e.s. of (\ref{1}), (\ref{2}) is unique and coincides with $u_+$.
\end{corollary}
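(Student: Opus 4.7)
The plan is to observe that Corollary~\ref{cor4} is essentially a direct consequence of the two immediately preceding results: Theorem~\ref{th2}, which supplies the largest and smallest entropy solutions $u_+$ and $u_-$ of the Cauchy problem, and Theorem~\ref{th3}, which asserts that in the periodic case these two extremal solutions coincide. So the proof will be very short, little more than a logical combination of these facts.

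Concretely, I would start by fixing an arbitrary e.s.\ $u = u(t,x)$ of (\ref{1}), (\ref{2}). By the very definition of the largest (resp.\ smallest) e.s., together with Theorem~\ref{th2}, we have the sandwich estimate
\begin{equation*}
u_-(t,x) \le u(t,x) \le u_+(t,x) \quad \text{a.e. on } \Pi.
\end{equation*}
Here one uses that Theorem~\ref{th2} guarantees not only the existence of $u_+$ and $u_-$ but also their extremal property: every e.s.\ of the problem lies between $u_-$ and $u_+$. Under the periodicity assumption on $u_0$ in this subsection, Theorem~\ref{th3} yields $u_+ = u_-$ a.e.\ on $\Pi$, and the sandwich collapses to $u = u_+ = u_-$ a.e.

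Since this forces any two entropy solutions of (\ref{1}), (\ref{2}) to agree with $u_+$ almost everywhere, uniqueness follows and the proof is complete. There is no real obstacle to overcome here; all the heavy lifting (existence of extremal e.s., the Kato-type comparison argument of Lemma~\ref{lem3}, and the averaging argument on the torus that identifies $u_+$ with $u_-$) has already been carried out. The only point worth mentioning explicitly is that the hypothesis of periodicity of $u_0$ is precisely what makes Theorem~\ref{th3} applicable, so the corollary is phrased within this periodic subsection.
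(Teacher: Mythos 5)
Your proposal is correct and follows exactly the paper's own argument: the paper deduces Corollary~\ref{cor4} from the sandwich $u_-\le u\le u_+$ (the extremal property established in Theorem~\ref{th2}) combined with the identity $u_+=u_-$ from Theorem~\ref{th3}. Nothing is missing.
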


More generally we establish below the comparison principle formulated in Theorem~\ref{th4}.
\subsection{Proof of Theorem~\ref{th4}}

For definiteness suppose that the function $u_0(x)$ is periodic. The case of periodic $v_0$ is treated similarly. By Theorem~\ref{th3}  the  functions $u_+=u_-$ coincide with the unique e.s. of (\ref{1}), (\ref{2}).
As follows from Theorem~\ref{th2contr} and the condition $u_0\le v_0$, $u_-\le v_-$. Therefore, $u=u_-\le v_-\le v$, as was to be proved.

\medskip
We underline that for conservation laws (\ref{con}) Theorems~\ref{th2}--\ref{th4} were establishes in \cite{PaMax1,PaMax2,PaIzv}.
Moreover, the comparison principle and the uniqueness of e.s. remain valid in the case when the initial function is periodic at least in $n-1$ independent directions, this can be proved by the same methods as for conservations laws, see \cite{PaMax2,PaIzv}.

\section{The long time decay property}\label{secDec}

We will essentially rely on results about decay of space-periodic e.s. Assume that the initial function $u_0$ is periodic. Let
$$I=\int_{\T^n} u_0(x)dx$$ be the mean value of initial data, and
$$
L'=\{\xi\in\R^n | \xi\cdot e\in\Z \ \forall e\in L \}
$$
be the dual lattice to the lattice of periods $L$.

\begin{theorem}\label{thM}
Assume that for all $\xi\in L'$, $\xi\not=0$ there is no vicinity of $I$, where simultaneously the function $\xi\cdot\varphi(u)$ is affine and the function $a(u)\xi\cdot\xi=0$ almost everywhere. Then
\begin{equation}\label{decp}
\esslim_{t\to+\infty} u(t,x)=I \ \mbox{ in } L^1(\T^n).
\end{equation}
\end{theorem}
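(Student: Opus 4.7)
\noindent\textbf{Proof plan for Theorem~\ref{thM}.} The strategy is an $\omega$-limit set argument built on the strong compactness afforded by the nonlinearity-diffusivity hypothesis. By Corollary~\ref{cor4} the e.s. $u$ is unique and periodic in $x$ with lattice $L$. Testing the weak form of (\ref{1c}) with cut-offs $\chi(t)\beta(x/k)$ as in the proof of Theorem~\ref{th3}, dividing by $k^n$ and letting $k\to\infty$, one sees that the mean value is preserved:
$$
\int_{\T^n} u(t,x)\,dx = I \qquad \text{for a.e. } t>0.
$$

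\noindent\textbf{Step 1 (Finite global entropy dissipation).} I apply the entropy inequality (\ref{entr}) with $\eta(u)=(u-I)^2/2$ against the test functions $\chi(t)\beta(x/k)$ and pass $k\to\infty$ as above. Since $T_{\eta'}(\varphi)$ and $T_{\eta'}(A)$ enter only through $x$-divergences, their contributions vanish on the torus, and one obtains
$$
\frac{d}{dt}\int_{\T^n}\frac{(u-I)^2}{2}\,dx + \int_{\T^n}\sum_{j=1}^{n}(\div_x B_j(u))^2\,dx\le 0
$$
in $\D'(\R_+)$. Integration in $t$ yields the crucial bound
$$
\int_0^{+\infty}\!\!\int_{\T^n}\sum_{j=1}^{n}(\div_x B_j(u))^2\,dx\,dt\le \frac{1}{2}\int_{\T^n}(u_0-I)^2\,dx<+\infty.
$$

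\noindent\textbf{Step 2 ($\omega$-limit and strong compactness).} For any sequence $t_k\to+\infty$ set $u_k(t,x)=u(t+t_k,x)$. Each $u_k$ is a periodic e.s. of (\ref{1}), uniformly bounded, and by Step 1
$$
\int_0^{T}\!\!\int_{\T^n}\sum_{j=1}^{n}(\div_x B_j(u_k))^2\,dx\,dt\mathop{\to}_{k\to\infty} 0\qquad\forall T>0.
$$
The decisive step is to extract a subsequence converging \emph{strongly} in $L^1_{loc}(\R_+\times\T^n)$ to some bounded periodic $v$. I intend to invoke the H-measure localization principle (as developed in the author's earlier work on scalar conservation laws and parabolic equations with continuous flux). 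The hypothesis of Theorem~\ref{thM} is tailored exactly to this: for every $\xi\in L'\setminus\{0\}$, the combined parabolic symbol cannot vanish on a vicinity of the common essential value $I$, which is precisely what forces the localized H-measure on the $\T^n$-modes to be trivial and promotes weak to strong compactness.

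\noindent\textbf{Step 3 (Identification of the limit).} Passing to the limit in the weak formulation of (\ref{1c}), of the chain rule (ii), and of the entropy inequality (\ref{entr}), together with the $L^2$ convergence of $\div_x B_j(u_k)$ to zero, shows that the limit $v$ satisfies $\div_x B_j(v)=0$ for all $j$. Since $\div_x B_j(v)=\sum_i b_{ji}(v)\partial_i v$, this forces $b(v)\nabla_x v=0$ and hence $a(v)\nabla_x v=0$ a.e., so the second-order term in (\ref{1}) drops out and $v$ is a periodic entropy solution of the first-order conservation law $v_t+\div_x\varphi(v)=0$. Moreover, because the dissipation term vanishes identically in the limit, the entropy inequality becomes an \emph{equality} for every convex $\eta\in C^2(\R)$, and $\int_{\T^n}v(t,x)\,dx=I$ is inherited from $u$.

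\noindent\textbf{Step 4 (Conclusion $v\equiv I$).} A periodic bounded e.s. of a scalar conservation law that satisfies entropy equalities and whose flux $\xi\cdot\varphi$ fails to be affine on any vicinity of its mean $I$ for any $\xi\in L'\setminus\{0\}$ must be identically equal to its mean; this is the classical decay result for periodic e.s. of conservation laws with merely continuous flux (cf.\ the references mentioned after Corollary~\ref{cor4}). Applying it to $v$ gives $v\equiv I$ a.e. Since every sequence $t_k\to+\infty$ admits a subsequence along which $u(t_k+\cdot,\cdot)\to I$ in $L^1_{loc}$, the full limit exists and (\ref{decp}) holds.

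\noindent\textbf{Main obstacle.} The crux is Step 2: obtaining strong $L^1_{loc}$ compactness of the shift sequence $\{u_k\}$ when the flux is only continuous and the diffusion is degenerate. The combined nonlinearity-diffusivity condition of Theorem~\ref{thM} is precisely what is needed for the H-measure localization argument to eliminate all nontrivial oscillations on each $\T^n$-mode $\xi\in L'\setminus\{0\}$; once this is in hand, the remaining passage to the limit in the entropy formulation and the reduction to the conservation-law decay theorem are standard.
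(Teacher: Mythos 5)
First, a point of comparison: the paper does not prove Theorem~\ref{thM} at all. Immediately after the statement it says that the theorem ``was established in recent paper \cite{PaJDE}'' (and generalizes \cite{ChPer2,PaNHM}); it is imported as a black box and then used to prove Theorem~\ref{thD}. So there is no in-paper proof to match your argument against; what can be said is that your plan follows the general strategy of that cited literature (conservation of the mean, global entropy dissipation bound, time-shifted sequence, strong precompactness via a localization principle, identification of the $\omega$-limit). Step~1 is correct as stated: testing with $\chi(t)\beta(x/k)$, dividing by $k^n$ and letting $k\to\infty$ does kill the flux and diffusion terms and yields the finite dissipation bound.

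There are, however, two genuine gaps. The first is that Step~2 is the entire mathematical content of the theorem and you only assert it: for merely continuous $\varphi$ and merely measurable, degenerate $a$, the H-measure (or kinetic averaging) localization principle that upgrades weak to strong convergence of the shifts $u_k$ is precisely the hard part of \cite{PaJDE}; ``I intend to invoke'' it is a citation of the result being proved, not a proof. The second gap is in Step~4 and would make the argument fail even granting Step~2. The hypothesis of Theorem~\ref{thM} only excludes that $\xi\cdot\varphi$ is affine \emph{and} $a(u)\xi\cdot\xi\equiv 0$ simultaneously near $I$; it is perfectly allowed that $\xi\cdot\varphi$ is affine near $I$ for some $\xi\in L'\setminus\{0\}$, as long as $a(u)\xi\cdot\xi\not\equiv 0$ there. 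In that case the limit $v$ is a periodic e.s. of the first-order conservation law whose flux is affine in the direction $\xi$ near the mean value $I$, and the classical decay theorem for conservation laws gives nothing: $v$ may a priori be a nonconstant traveling profile in that direction. To rule this out you must re-use the diffusive part of the hypothesis at the identification stage, via the constraint $\div_x B_j(v)=0$ inherited in the limit (which, for directions with $a(u)\xi\cdot\xi\not\equiv 0$ near $I$, forbids oscillation of $v$ in the direction $\xi$); your reduction to the pure conservation-law result simply discards this information. A minor additional flaw: the claim that the entropy inequality becomes an \emph{equality} for $v$ is unjustified (weak limits preserve the inequality, and lower semicontinuity of the dissipation points the wrong way), though it is also not needed for the argument.
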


Theorem~\ref{thM} was established in recent paper \cite{PaJDE} and generalizes earlier results \cite{ChPer2,PaNHM}.

To prove Theorem~\ref{thD}, we will follow the scheme of paper \cite{PaSIMA}, where the case of conservation laws $a\equiv 0$ was treated.

\subsection{Auxiliary lemmas}
The following simple lemmas were proved in \cite{PaSIMA}. For the sake of completeness we reproduce them with the proofs.

\begin{lemma}\label{lemA}
The norms $\|\cdot\|_V$ defined in (\ref{normV}) are mutually equivalent.
\end{lemma}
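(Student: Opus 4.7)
The plan is a standard covering argument that uses only that each bounded open set $V \subset \R^n$ contains a ball and is contained in a ball. Given two nonempty bounded open sets $V_1, V_2 \subset \R^n$, I will produce constants $c, C > 0$ with $c\|u\|_{V_2} \le \|u\|_{V_1} \le C\|u\|_{V_2}$ for every $u \in L^\infty(\R^n)$; by symmetry it suffices to establish one direction.

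First I would reduce to comparing a general $\|\cdot\|_V$ with the norm associated to a Euclidean ball. Since $V_1$ is open and nonempty, it contains some ball $B_\delta(x_0)$ for suitable $\delta > 0$ and $x_0 \in V_1$, so for every $y \in \R^n$ the translate $y + B_\delta(0) \subset (y - x_0) + V_1$, whence
$$
\int_{y+B_\delta(0)} |u(x)|\,dx \le \int_{(y-x_0)+V_1} |u(x)|\,dx \le \|u\|_{V_1}.
$$
Taking the supremum over $y$, this shows $\|u\|_{B_\delta(0)} \le \|u\|_{V_1}$.

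Next I would bound $\|u\|_{V_2}$ by $\|u\|_{B_\delta(0)}$. Since $V_2$ is bounded, $V_2 \subset B_R(0)$ for some $R > 0$. By a compactness or simple lattice argument, the ball $\overline{B_R(0)}$ can be covered by finitely many translates of $B_\delta(0)$, say $\overline{B_R(0)} \subset \bigcup_{i=1}^{N} (z_i + B_\delta(0))$, where $N = N(R, \delta, n)$ depends only on $R$, $\delta$, and the dimension. Then for every $y \in \R^n$,
$$
\int_{y+V_2} |u(x)|\,dx \le \int_{y+B_R(0)} |u(x)|\,dx \le \sum_{i=1}^N \int_{y+z_i+B_\delta(0)} |u(x)|\,dx \le N\,\|u\|_{B_\delta(0)}.
$$
Taking the supremum over $y$ and chaining with the previous step yields $\|u\|_{V_2} \le N\,\|u\|_{V_1}$. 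Swapping the roles of $V_1$ and $V_2$ supplies the reverse inequality.

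There is essentially no serious obstacle here; the only thing one must take care of is that the sets $V_1, V_2$ are assumed open (so each contains a ball) and bounded (so each is contained in a ball), after which the two translations and the finite subcover mechanically produce the two constants. Applying this with $V_2$ taken to be the unit ball recovers, in particular, the equivalence with the distinguished norm $\|\cdot\|_X$ from (\ref{normX}).
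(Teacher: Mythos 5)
Your proof is correct and rests on the same mechanism as the paper's: a finite covering of a compact closure by translates of an open set, with the translation invariance of the integrals turning the cover into a constant. The only cosmetic difference is that you route both norms through an auxiliary ball norm $\|\cdot\|_{B_\delta(0)}$, whereas the paper covers $\Cl V_1$ directly by finitely many translates of $V_2$ and then swaps the roles of $V_1$ and $V_2$.
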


\begin{proof}
Let $V_1,V_2$ be open bounded sets in $\R^n$, and $K_1=\Cl V_1$ be the closure of $V_1$. Then $K_1$ is a compact set while $y+V_2$, $y\in K_1$, is its open covering.
By the compactness there is a finite set $y_i$, $i=1,\ldots,m$, such that
$\displaystyle K_1\subset \bigcup\limits_{i=1}^m (y_i+V_2)$. This implies that for every $y\in\R^n$ and $u=u(x)\in L^\infty(\R^n)$
$$
\int_{y+V_1}|u(x)|dx\le\sum_{i=1}^m \int_{y+y_i+V_2}|u(x)|dx\le m\|u\|_{V_2}.
$$
Hence, $\forall u=u(x)\in L^\infty(\R^n)$
$$
\|u\|_{V_1}=\sup_{y\in\R^n}\int_{y+V_1}|u(x)|dx\le m\|u\|_{V_2}.
$$
Changing the places of $V_1$, $V_2$, we obtain the inverse inequality
$\|u\|_{V_2}\le l\|u\|_{V_1}$ for all $u\in L^\infty(\R^n)$, where $l$ is some positive constant. This completes the proof.
\end{proof}

\begin{lemma}\label{lemB}
Let $X_\alpha$, $\alpha\in\N$, be a countable family of proper linear subspaces of $\R^n$. Then there exists a lattice $L\subset\R^n$ such that $L\cap X_\alpha=\{0\}$ for all $\alpha\in\N$.
\end{lemma}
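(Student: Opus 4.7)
The plan is to obtain $L$ by a genericity argument: I would choose a basis $(e_1,\ldots,e_n)\in(\R^n)^n$ lying outside a certain countable union of null sets, and then set $L=\Z e_1+\cdots+\Z e_n$.

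For each $\alpha\in\N$, since $X_\alpha$ is a proper subspace of $\R^n$, I would fix a nonzero vector $\xi_\alpha\in\R^n$ with $X_\alpha\subseteq\{x\in\R^n:\xi_\alpha\cdot x=0\}$. Then for each nonzero $k=(k_1,\ldots,k_n)\in\Z^n$, the condition $k_1 e_1+\cdots+k_n e_n\in X_\alpha$ on the $n$-tuple $(e_1,\ldots,e_n)$ forces the single scalar equation $\sum_{i=1}^n k_i(\xi_\alpha\cdot e_i)=0$ on $(\R^n)^n$. Because $\xi_\alpha\ne 0$ and at least one coefficient $k_i$ is nonzero, this is a nontrivial linear equation, so the set
$$
B_{\alpha,k}=\{(e_1,\ldots,e_n)\in(\R^n)^n:k_1 e_1+\cdots+k_n e_n\in X_\alpha\}
$$
is contained in a proper linear hyperplane of $(\R^n)^n\simeq\R^{n^2}$, and therefore has $n^2$-dimensional Lebesgue measure zero.

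Adjoining the null set $B_0=\{(e_1,\ldots,e_n):\det(e_1,\ldots,e_n)=0\}$ of degenerate tuples, the countable union
$$
B=B_0\cup\bigcup_{\alpha\in\N}\bigcup_{k\in\Z^n\setminus\{0\}} B_{\alpha,k}
$$
is Lebesgue-null, hence its complement in $(\R^n)^n$ is nonempty (one could equivalently invoke the Baire category theorem). Any $(e_1,\ldots,e_n)\notin B$ is linearly independent, so $L=\Z e_1+\cdots+\Z e_n$ is a discrete subgroup of $\R^n$ of full rank; by construction $\sum_{i=1}^n k_i e_i\notin X_\alpha$ for every $\alpha\in\N$ and every nonzero $k\in\Z^n$, which is precisely the claim $L\cap X_\alpha=\{0\}$.

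The only substantive step is verifying that each $B_{\alpha,k}$ is a proper subset of $(\R^n)^n$; this is exactly where the hypothesis that $X_\alpha$ is a proper subspace (so $\xi_\alpha\ne 0$ exists) and the restriction to nonzero $k$ are both used. Everything else is the routine observation that a countable union of measure-zero sets is measure-zero.
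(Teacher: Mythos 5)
Your proof is correct and follows essentially the same route as the paper: the paper works in the space $\mathrm{L}_n$ of $n\times n$ matrices and excludes the null sets $H_{\xi,\alpha}=\{A: A\xi\in X_\alpha\}$ together with the degenerate matrices, then takes $L=A(\Z^n)$, which is exactly your argument after identifying a matrix $A$ with the tuple of its columns $(e_1,\ldots,e_n)$. Your explicit use of a linear functional $\xi_\alpha$ vanishing on $X_\alpha$ just makes transparent the paper's assertion that each $H_{\xi,\alpha}$ is a proper linear subspace.
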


\begin{proof}
Denote by $\mathrm{L}_n$ the linear space of linear endomorphisms of $\R^n$, and by
$\mathrm{GL}_n$ the group of linear automorphisms of $\R^n$. It is clear that $\mathrm{GL}_n$ is an open subset of $\mathrm{L}_n$, this set can be identified with the set of all $n\times n$ matrices with nonzero determinant. For $\alpha\in\N$, $\xi\in\Z^n$, $\xi\not=0$, we define the sets
$$
H_{\xi,\alpha}=\{ \ A\in\mathrm{L}_n: \ A\xi\in X_\alpha \ \}, \quad
H=\bigcup_{\xi\in\Z^n\setminus\{0\},\alpha\in\N} H_{\xi,\alpha}.
$$
Obviously, the sets $H_{\xi,\alpha}$ are proper linear subspaces of $\mathrm{L}_n$ and therefore they have zero Lebesgue measure in $\mathrm{L}_n$. This implies that
$H$ is a set of zero measure as a countable union of $H_{\xi,\alpha}$. Since the measure of $\mathrm{GL}_n$ is positive (even infinite), then $\mathrm{GL}_n\not\subset H$ and we can find $A\in \mathrm{GL}_n$ such that $A\not\in H$. We define the lattice $L$ as the image of the standard lattice $\Z^n$ under the automorphism $A$: $L=A(\Z^n)$. Since $A\notin H$, we conclude that $L$ satisfies the required condition $L\cap X_\alpha=\{0\}$ $\forall\alpha\in\N$.
\end{proof}

We define the set $F\subset\R$ consisting of points $u_0$ such that in any neighborhood of $u_0$ it is not possible that simultaneously the vector $\varphi(u)$ is affine and the matrix $a(u)=0$ on this neighborhood. We also denote $F_+=F\cap (0,+\infty)$, $F_-=F\cap (-\infty,0)$.

\begin{lemma}\label{lemC}
Assume that nonlinearity-diffusivity assumption (\ref{GN}) is satisfied. Then
\begin{equation}\label{Fpm}
\sup F_-=\inf F_+=0.
\end{equation}
\end{lemma}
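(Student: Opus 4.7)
The inequalities $\inf F_+ \ge 0$ and $\sup F_- \le 0$ are immediate from the definitions $F_+ \subset (0,+\infty)$ and $F_- \subset (-\infty,0)$, so the whole content of the lemma is the reverse inequalities $\inf F_+ \le 0$ and $\sup F_- \ge 0$. By the obvious $u \mapsto -u$ symmetry (which swaps the roles of $F_\pm$ and replaces $\varphi(u), a(u)$ by $-\varphi(-v), a(-v)$ in (\ref{GN})), it suffices to establish $\inf F_+ \le 0$.

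The plan is a proof by contradiction. Suppose $\inf F_+ > 0$. Then there is some $\varepsilon > 0$ with $F_+ \cap (0,\varepsilon) = \emptyset$, i.e.\ every point $u \in (0,\varepsilon)$ lies outside $F$. By the definition of $F$, for each such $u$ we can choose an open interval $U_u \subset (0,\varepsilon)$ containing $u$ on which $\varphi$ is affine, say $\varphi(v) = c_u v + d_u$ for $v \in U_u$ with $c_u, d_u \in \R^n$, and on which $a(v) = 0$ almost everywhere. The goal is then to upgrade these local properties to a single statement on the whole connected interval $(0,\varepsilon)$, which will directly contradict hypothesis (\ref{GN}) applied to that interval.

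The main step — and the only one that requires a moment's thought — is the passage from local to global affineness of $\varphi$. If two such intervals $U_{u_1}$ and $U_{u_2}$ overlap, then on their nonempty intersection the two affine expressions $c_{u_1} v + d_{u_1}$ and $c_{u_2} v + d_{u_2}$ must coincide, which forces $c_{u_1} = c_{u_2}$ and $d_{u_1} = d_{u_2}$. Hence the map $u \mapsto (c_u, d_u)$ is locally constant on the connected interval $(0,\varepsilon)$, and therefore constant there; so $\varphi$ is affine on all of $(0,\varepsilon)$. The statement $a \equiv 0$ on $(0,\varepsilon)$ is easier: extract a countable subcover of $(0,\varepsilon)$ from $\{U_u\}$ (which exists by Lindelöf) and note that a countable union of null sets is null. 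Combining, $\varphi$ is affine and $a \equiv 0$ on the interval $(0,\varepsilon)$, contradicting (\ref{GN}).

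The main obstacle, such as it is, is the gluing argument for $\varphi$; everything else is immediate. No other results from the paper are needed beyond the definition of $F$ and the precise formulation of (\ref{GN}).
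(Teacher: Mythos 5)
Your proposal is correct and follows essentially the same route as the paper: assume $\inf F_+>0$ (or $\sup F_-<0$), note that every point of the resulting interval has a neighborhood where $\varphi$ is affine and $a=0$ a.e., glue these local statements into a global one on the whole interval by connectedness, and contradict (\ref{GN}). Your gluing argument for $\varphi$ (overlapping affine pieces share coefficients, so the coefficient map is locally constant, hence constant) and the Lindel\"of argument for $a$ are just slightly more explicit versions of the paper's ``$\varphi'$ is locally constant, hence constant'' step.
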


\begin{proof}
Supposing the contrary, we find that either $\sup F_-<0$ or $\inf F_+>0$. We consider the latter case $\inf F_+>0$, the former case $\sup F_-<0$ is treated similarly.
Let $0<b<\inf F_+$ (notice that $\inf F_+=+\infty$ in the case $F_+=\emptyset$). We see that $(0,b)\cap F=\emptyset$, that is, the vector $\varphi(u)$ is affine and the matrix $a(u)=0$ in some vicinity of each point in $(0,b)$. We deduce that, firstly,
$a(u)=0$ on $(0,b)$. Secondly, $\varphi(u)\in C^\infty((0,b))$ and $\varphi'(u)$ is a piecewise constant continuous function on $(0,b)$. This is possible only if $\varphi'(u)$ is constant, $\varphi'(u)\equiv c\in\R^n$. This implies that $\varphi(u)=uc+d$ on $(0,b)$, $d\in\R^n$. Hence, the vector $\varphi(u)$ is affine on $(0,b)$ while the matrix $a(u)=0$ on  $(0,b)$, which contradicts to (\ref{GN}).
\end{proof}

\subsection{Proof of Theorem~\ref{thD}}

The proof is relied on the decay property for periodic e.s. First we choose a lattice of periods $L$.

Let $J$ be the sets of intervals $I=(a,b)$ with rational ends $a,b\in\Q$ such that $I\cap F\not=\emptyset$. It is clear that $J$ is a countable set.
For each $I\in J$ we define the linear sets
\begin{align*}
X_I=\{ \ \xi\in\R^n: \ \mbox{ the function } u\to\xi\cdot\varphi(u) \mbox{ is affine on } I, \\
\mbox{ and the function } u\to a(u)\xi\cdot\xi=0 \mbox{ on } I \ \}.
\end{align*}
Then $X_I\not=\R^n$, otherwise, the vector $\varphi(u)$ is affine on $I$ while the matrix $a(u)=0$ on $I$, which contradicts to the condition that $I$ is a neighborhood of some point $u_0\in I\cap F$. Hence $X_I$, $I\in J$, are proper linear subspaces of $\R^n$. By Lemma~\ref{lemB} we can find a lattice $L_1$ in $\R^n$
such that $\xi\notin X_I$ for all $\xi\in L_1$, $\xi\not=0$, and all $I\in J$. Let $L=L_1'=\{e\in\R^n: \xi\cdot e\in\Z \ \forall\xi\in L_1\}$ be the dual lattice. Then by the duality $L_1=L'$. By the density of $\Q$, any nonempty interval $(a,b)$ intersecting with F contains some interval $I\in J$. Since every nonzero $\xi\in L'=L_1$ does not belong to $X_I$, we claim that either the function $\xi\cdot\varphi(u)$ is not affine on $I$ or $a(u)\xi\cdot\xi\not\equiv 0$ on $I$. All  the more this property holds on the larger interval $(a,b)$. Hence,
\begin{eqnarray}\label{p1}
\forall\xi\in L', \xi\not=0, \mbox{ either the function } u\to\xi\cdot\varphi(u) \nonumber\\
\mbox{ is not affine or } a(u)\xi\cdot\xi\not\equiv 0 \mbox{ on intervals intersecting with } F.
\end{eqnarray}
Let $e_k$, $k=1,\ldots,n$, be a basis of the lattice $L$. We define for $r>0$ the parallelepiped
$$
P_r=\left\{ \ x=\sum_{k=1}^n x_ke_k: \ -r/2\le x_k<r/2, k=1,\ldots,n \ \right\}.
$$
It is clear that $P_r$ is a fundamental parallelepiped for a lattice $rL$.
We introduce the functions
$$
v_{0r}^+(x)=\sup_{e\in L} u_0(x+re), \quad v_{0r}^-(x)=\inf_{e\in L} u_0(x+re).
$$
Since $L$ is countable, these functions are well-defined in $L^\infty(\R^n)$,
and $\|v_{0r}^\pm\|_\infty\le C_0\doteq\|u_0\|_\infty$. It is clear that $v_{0r}^\pm(x)$ are $rL$-periodic and
\begin{equation}\label{ine2v}
v_{0r}^-(x)\le u_0(x)\le v_{0r}^+(x).
\end{equation}
We denote
$$
V_r(x)=\sup_{e\in L} |u_0(x+re)|, \quad M_r=\frac{1}{|P_r|}\int_{P_r} V_r(x)dx.
$$
It is clear that for a.e. $x\in\R^n$
$$
|v_{0r}^\pm(x)|\le V_r(x)\le C_0.
$$
As was demonstrated in \cite{PaSIMA}, under condition (\ref{van})
\begin{equation}\label{l3}
M_r\to 0 \ \mbox{ as } r\to+\infty.
\end{equation}
For the sake of completeness we give details of the proof.

We fix $\varepsilon>0$ and define the set $A=\{ \ x\in\R^n: \ |u_0(x)|>\varepsilon \ \}$.
In view of (\ref{van}) the measure of this set is finite, $\meas A=p<+\infty$. We also define the sets
$$
A_r^e=\{ \ x\in P_r: \ x+re\in A \ \}\subset P_r, \quad r>0, \ e\in L, \quad A_r=\bigcup_{e\in L} A_r^e.
$$
By the translation invariance of Lebesgue measure and the fact that $\R^n$ is the disjoint union of the sets $re+P_r$, $e\in L$, we have
$$
\sum_{e\in L}\meas A_r^e=\sum_{e\in L}\meas (re+A_r^e)=\sum_{e\in L}\meas (A\cap (re+P_r))=\meas A=p.
$$
This implies that
\begin{equation}\label{Ar}
\meas A_r\le\sum_{e\in L}\meas A_r^e=p.
\end{equation}
If $x\in P_r\setminus A_r$ then $|u_0(x+re)|\le\varepsilon$ for all $e\in L$, which implies that $V_r(x)\le\varepsilon$. Taking (\ref{Ar}) into account, we find
$$
\int_{P_r} V_r(x)dx=\int_{A_r} V_r(x)dx+\int_{P_r\setminus A_r} V_r(x)dx\le C_0\meas A_r+\varepsilon\meas P_r\le C_0p+\varepsilon|P_r|.
$$
It follows from this estimate that
$$\limsup_{r\to+\infty} M_r\le\lim_{r\to+\infty}\left(\frac{C_0p}{|P_r|}+\varepsilon\right)=\varepsilon$$ and since $\varepsilon>0$ is arbitrary, we conclude that (\ref{l3}) holds.
Let
$$
M_r^\pm=\frac{1}{|P_r|}\int_{P_r} v_{0r}^\pm(x)dx
$$
be mean values of $rL$-periodic functions $v_{0r}^\pm(x)$.
In view of (\ref{l3})
\begin{equation}\label{l4}
|M_r^\pm|\le M_r\mathop{\to}_{r\to\infty} 0.
\end{equation}
By (\ref{l4}) and (\ref{Fpm}) we can find such values $B_r^-,B_r^+\in F$, where $r>0$ is sufficiently large, that $B_r^-\le M_r^-\le M_r^+\le B_r^+$, and that $B_r^\pm\to 0$ as $r\to\infty$. We define the $rL$-periodic functions
$$
u_{0r}^+(x)=v_{0r}^+(x)-M_r^++B_r^+\ge v_{0r}^+(x), \quad u_{0r}^-(x)=v_{0r}^-(x)-M_r^-+B_r^-\le v_{0r}^-(x)
$$
with the mean values $B_r^+,B_r^-$, respectively. In view of (\ref{ine2v}), we have
\begin{equation}\label{ine2}
u_{0r}^-(x)\le u_0(x)\le u_{0r}^+(x).
\end{equation}
Let $u_r^\pm$ be unique (by Corollary~\ref{cor4}) e.s. of (\ref{1}), (\ref{2}) with initial functions
$u_{0r}^\pm$, respectively. Taking into account that $(rL)'=\frac{1}{r}L'$, we derive from (\ref{p1}) that the requirement of Theorem~\ref{thM}, corresponding to the lattice $rL$ and the mean value $M=B_r^\pm\in F$, is satisfied. By this theorem we find that
\begin{equation}\label{l5}
\lim_{t\to+\infty}\int_{P_r}|u_r^\pm(t,x)-B_r^\pm|dx=0.
\end{equation}
By the periodicity, for each $y\in\R^n$
$$
\int_{y+P_r}|u_r^\pm(t,x)-B_r^\pm|dx=\int_{P_r}|u_r^\pm(t,x)-B_r^\pm|dx,
$$
which readily implies that for $V=\Int P_r$
$$
\|u_r^\pm(t,x)-B_r^\pm\|_V=\int_{P_r}|u_r^\pm(t,x)-B_r^\pm|dx.
$$
In view of Lemma~\ref{lemA} we have the estimate
$$\|u_r^\pm(t,x)-B_r^\pm\|_X\le C\int_{P_r}|u_r^\pm(t,x)-B_r^\pm|dx, \ C=C_r=\const.$$
By (\ref{l5}) we claim that
\begin{equation}\label{l5a}
\lim_{t\to+\infty}\|u_r^\pm(t,\cdot)-B_r^\pm\|_X=0.
\end{equation}
Let $u=u(t,x)$ be an e.s. of the original problem (\ref{1}), (\ref{2}) with initial data $u_0(x)$. Since the functions $u_{0r}^\pm$ are periodic, then it follows from (\ref{ine2}) and the comparison principle stated in Theorem~\ref{th4} that $u_r^-\le u\le u_r^+$ a.e. in $\Pi$. This readily implies the relation
\begin{eqnarray}\label{l6}
\|u(t,\cdot)\|_X\le \|u_r^-(t,\cdot)\|_X+\|u_r^+(t,\cdot)\|_X\le \nonumber\\
\|u_r^-(t,x)-B_r^-\|_X+\|u_r^+(t,x)-B_r^+\|_X+c(|B_r^-|+|B_r^+|),
\end{eqnarray}
where $c$ is Lebesgue measure of the unit ball $|x|<1$ in $\R^n$.
In view of (\ref{l5a}) it follows from (\ref{l6}) in the limit as $t\to+\infty$
that
$$
\limsup_{t\to+\infty}\|u(t,\cdot)\|_X\le c(|B_r^-|+|B_r^+|).
$$
Since $B_r^\pm\to 0$ as $r\to\infty$, the latter relation implies the desired decay property
$$
\lim_{t\to+\infty}\|u(t,\cdot)\|_X=0.
$$

\section*{Acknowledgements}
This work was supported by the ``RUDN University Program 5-100'', by the Ministry of Science and Higher Education of the Russian  Federation (project no. 1.445.2016/1.4) and by the Russian Foundation for Basic Research (grant 18-01-00258-a.)

\end{document}